\theoremstyle{plain}
\newtheorem{theorem}{Theorem}[section]
\newtheorem{lemma}[theorem]{Lemma}
\newtheorem{definition}[theorem]{Definition}
\newtheorem{proposition}[theorem]{Proposition}
\newtheorem{corollary}[theorem]{Corollary}
\newtheorem{notation}[theorem]{Notation}
\theoremstyle{remark}
\newcommand{\hxk}{H_x^{(k(n-1)+1)}}
\newcommand{\hyk}{H_y^{(k(n-1)+1)}}
\newcommand{\hxn}{H_x^{(n)}}
\newcommand{\hyn}{H_y^{(n)}}
\newcommand{\rhoxk}{\rho_x^{(k(n-1)+1)}}
\newcommand{\rhoyk}{\rho_y^{(k(n-1)+1)}}
\newcommand{\rhoxn}{\rho_x^{(n)}}
\newcommand{\rhoyn}{\rho_y^{(n)}}
\newcommand{\E}{E_{k,n}}
\newcommand{\orb}{\hbox{orb}_n}
\newcommand{\pixn}{\pi_x^{(n)}}
\newcommand{\pixnk}{\pi_x^{(k(n-1)+1)}}
\title{Representations of Higman-Thompson groups from Cuntz algebras}
\author{Francisco Ara\'{u}jo}
\address{Department of Mathematics,
Instituto Superior T\'{e}cnico, University of Lisbon,
Av.\ Rovisco Pais 1, 1049-001 Lisboa, Portugal.}
\email{francisco.araujo@tecnico.ulisboa.pt}
\author{Paulo R.\ Pinto}
\address{Department of Mathematics, CAMGSD, Instituto Superior T\'{e}cnico, Univ.\ Lisboa,
Av.\ Rovisco Pais 1, 1049-001 Lisbon, Portugal.}
\email{ppinto@math.tecnico.ulisboa.pt}
\begin{document}
	
	\maketitle

	\begin{abstract}
		Every representation of the Cuntz algebra $\mathcal{O}_n$ leads to a unitary representation of the Higman-Thompson group $V_n$. We consider the family $\{\pi_x\}_{x\in [0,1[}$ of permutative representations of $\mathcal{O}_n$ that arise from the interval map $f(x)=nx$ (mod 1) acting on the Hilbert space that underlies each orbit, and
then study the unitary equivalence and the irreducibility of the corresponding family $\{\rho_x\}_{x\in [0,1[}$ of representations of Higman-Thompson group $V_n$, showing that that these representations are indeed irreducible and moreover $\rho_x$ and $\rho_y$ are equivalent if and only if the orbits of $x$ and $y$ coincide.
	\end{abstract}

\medskip

{\it Keywords}:\ Higman-Thompson groups, Cuntz algebras, Representations

\medskip

MSC\ {46L05, 20F65, 22D25, 20F38, 37A55}

	\tableofcontents

	\section{Introduction}

In this paper we investigate the interplay between representations of Cuntz algebras (see \cite{cuntz}) and representations of Higman-Thompson groups $V_n$ (see \cite{cfp,Higman,brown}). In the context of the interval map $f(x)=nx$ (mod 1), we explain how every point $x\in [0,1[$ gives rise to a representation $\rho_x$ of the Higman-Thompson group $V_n$ acting on the Hilbert space associated to the orbit of $x$ (which shows up via a representation of the Cuntz algebra $\mathcal{O}_n$ on the same Hilbert space). Then the main aim is to study
the (unitary) equivalence and the irreducibility of this family ${\{\rho_x\}}_{x\in [0,1[}$ of representations of Higman-Thompson groups, leading to the main result:
$\rho_x$ and $\rho_y$ are equivalent if and only if the orbits (under $f$) of $x$ and $y$ coincide. In order to prove this, we need to use the knowledge of the underlying representations of the Cuntz algebras together with both embeddings of Cuntz algebras and embeddings of Higman-Thompson groups which leads us to extend the results obtained in \cite{ppinto} (for the particular $n=2$ case) to the general case $n$. The proofs for the general case typically required a different methods of the ones for the $n=2$ case \cite{ppinto}.

Our work is motivated by the recent renewed interest in the well developed theory of representations of the Cuntz algebras $\mathcal{O}_n$ and its application to the representation theory of the Higman-Thompson groups. New examples of concrete representations of these groups are regarded as being important not only to tackle some famous open problems (e.g.\ the (non)amenability of $F_2$) but also in pursuing the recent Jones' machinery where subfactors, Higman-Thompson groups and conformal field theory become linked together, see e.g.\ \cite{Vjones,brohier2021}.

We remark that our representations are related to Pythagorean representations in the sense of \cite{jones2019}. Namely, the $n$ generators $t_1,...,t_n$ of the C*-algebra $P_n$ (put forward in \cite{jones2019} as part of Jones' technology to produce representations of Higman-Thompson groups from those of $P_n$, the so-called Pythagorean representations) decomposes the space in $n$ subspaces
$$t_1^\ast t_1+t_2^\ast t_2 ...+ t_n^\ast t_n=\mathbf{1},$$
 whereas the $n$ generators $s_1,..., s_n$ of the Cuntz algebra $\mathcal{O}_n$ provides an orthogonal decompositions of the space
 $$s_1^\ast s_1+...+s_ns_n^\ast=\mathbf{1},\ \ s_i^\ast s_i=\delta_{ij}\mathbf{1}.$$
  Then, $P_n$ has many quotients, with $\mathcal{O}_n$ being one of them, mapping $t_i\to s_i^\ast$. So every representation of $\mathcal{O}_n$ gives rise to a representation of $P_n$, and thus of the Higman-Thompson groups.



In the 1960's, R.J.\ Thompson introduced the so called Thompson groups $F_2 \subset T_2\subset V_2$, see \cite{cfp}. Then G. Higman generalized $V_2$ to a family $\{V_n\}_{n\in\mathbb{N}}$ of finitely presented discrete groups \cite{Higman}. K.S.\ Brown extended the groups $V_n$ to triplets $F_n\subset T_n\subset V_n$, see \cite{brown}. This groups are now called Higman-Thompson groups. These are countable and discrete groups and fairly easy to define as certain piecewise linear maps from the interval $[0,1]$ onto itself, as we shall review below. Almost every question related to these groups is a challenge, typically harder for the smaller groups, for example it is still an open problem whether $F_n$ is an amenable group or not whereas the others contain copies of free groups thus they are nonamenable, see e.g.\ \cite{olesen}. We note that several approximation properties for groups are based on asymptotic behaviour of matrix coefficients of representations of the groups in question. Besides this, V.F.R.\ Jones has recently developed a machinery to produce certain unitary representations of Higman-Thompson groups leading him to yield (unoriented and oriented) knot and link invariants \cite{Vjones} by playing with the matrix coefficients of his representations. It is therefore of interest to determine as much information as possible about the representations theory of the Higman-Thompson groups (the amenability problem springs to mind).

On the other hand we have a fairly well developed theory of Cuntz algebra $\mathcal{O}_n$ representations with an enormous boost since the seminal work by Bratteli and Jorgensen \cite{Jorgensen}. Recall that a representations of $\mathcal{O}_n$ on a complex Hilbert space is a family of isometries $S_1,...,S_n$ acting on $H$ with orthogonal ranges such that $H$ is subdivided into these ranges. The permutative representations of $\mathcal{O}_n$, where the isometries permute the vectors of a fixed Hilbert basis, have a host of applications, for example to fractals, wavelets, dynamical systems \cite{Jorgensen,KawamuraIn,ar2008}.

The link between these two subject, representations of Cuntz algebras and Higman-Thompson groups, was unveiled in \cite{Nek,birget0}, where a realization of the Higman-Thompson group $V_n$ as a subgroup of the unitary group  of $\mathcal{O}_n$ was fabricated. It is naturally expected that tools from the representation theory of Cuntz algebras can be used in the study of the representation theory of the Higman-Thompson groups. This interplay is what we investigate in this paper.
Indeed we clarify why every representation $\pi$ of the Cuntz algebra $\mathcal{O}_n$ on a Hilbert space $H$ gives rise to a unitary representation $\rho_\pi$ of the Higman-Thompson group $V_n$ on the same Hilbert space:
\begin{equation}
  \pi\in\hbox{Rep}(\mathcal{O}_n, H)\ \ \longmapsto \ \ \rho_\pi\in\hbox{Rep}(V_n, H),
\label{eqmap}
\end{equation}
see Theorem \ref{thm:rhorep}.
This is accomplished by regarding the elements of $V_n$ either as piecewise linear maps or as tables.
Namely, the Higman-Thompson group $V_n$ is the set of piecewise linear functions $g: [0,1)\to [0,1)$ whose slopes are powers of $n$ and the non-differentiable points live in the $n$-adic set $\mathbb{Z}[{\frac{1}{n}}]$ (with composition being the group operation). Every such function can be represented by a table $\begin{bmatrix}
	a_1 & a_2 & \ldots & a_m \\
	b_1 & b_2 & \ldots & b_m
	\end{bmatrix}$ where $a_i$ and $b_i$ are multi-indices in $\{1,...,n\}$ related with the $x$- and $y$-axis partitions of $f$, as we will review below. Then
$$\begin{bmatrix}
	a_1 & a_2 & \ldots & a_m \\
	b_1 & b_2 & \ldots & b_m
	\end{bmatrix}\ \ \longrightarrow\ \ s_{a_1}a_{a_1}^\ast +...+s_{a_m}a_{a_m}^\ast $$
gives an embedding of $V_n$ into the (unitaries) C*-algebra $\mathcal{O}_n$, where for example $s_{132}=s_{1}s_3s_2$. Hence every representation $\pi: \mathcal{O}\to B(H)$ on a Hilbert space $H$ gives rise to a representation of $V_n$ by taking the restriction of $\pi$ into the image of the above embedding of $V_n$. Therefore the representations $\rho_\pi: V_n\to B(H)$ can be seen as unitary operators on the Hilbert space $H$ as follows
$$\begin{bmatrix}
	a_1 & a_2 & \ldots & a_m \\
	b_1 & b_2 & \ldots & b_m
	\end{bmatrix}\ \ \longrightarrow\ \ \pi(s_{a_1}a_{a_1}^\ast +...+s_{a_m}a_{a_m}^\ast).$$
Then several questions can be asked, for example, if the unitary equivalence or irreducibility is preserved in \eqref{eqmap}. We study this problem by considering the permutative representations $\pi_x$ of $\mathcal{O}_n$ on the Hilbert space $H_x$ that encode the orbit of point $x\in [0,1[$ with respect to the interval map $f(x)=nx$ (mod 1) as in \cite{ar2008}, leading to the representation $\rho_x:=\rho_{\pi_x}$ of $V_n$. It is usually difficult to distinguish equivalent classes and decide the irreducibility of the Higman-Thompson groups. We however succeeded in characterizing the inequivalent classes and irreducibility of this family $\{\rho_x\}_{x\in [0,1[}$ of representations.

The plan of the rest of the paper is as follows. In Sect.\ \ref{preliminaries} we review some background
on the representation theory of operator algebras and of group theory together with the definition of the Higman-Thompson groups as piecewise linear maps and as tables.

In Sect.\ \ref{Embeddings of the Higman-Thompson's groups} we first provide and embedding $\iota: \mathcal{O}_{k(n-1)+1}\to \mathcal{O}_n$ as in Lemma \ref{thm:OninO2gen}. This embedding together with the embedding $\Psi: V_n\to \mathcal{O}_n$ in \cite{Nek} leads us to an embedding $\E: V_{k(n-1)+1} \to V_n$, which was also proved by Higman \cite{Higman}. This is proved in Theorem \ref{thm:EmbFinT} where we also show that $\E$ maps $T_{k(n-1)+1}$ into $T_n$ and $F_{k(n-1)+1}$ into $F_n$. In Subsect.\ \ref{HTtables} we clarify the Higman-Thompson group elements $g\in V_n$ as a piecewise map $g: [0,1[\to[0,1[$ and as a table $g$=$\begin{bmatrix}
	a_1 & a_2 & \ldots & a_m \\
	b_1 & b_2 & \ldots & b_m
	\end{bmatrix}$, in particular we write explicitly the piecewise map $g\in V_n$
from a table, see Eq.\ \eqref{eqtablePL}.

In Sect.\ \ref{Representations of $V_n$ in $H$} we derive the main results of the paper, starting with the construction of the representation $\rho_\pi$ of the Higman-Thompson group $V_n$ from a Cuntz algebra $\mathcal{O}_n$ representation $\pi$. We remark that since $\pi: \mathcal{O}_n\to B(H)$ is a *-homomorphism, $\pi$ is automatically continuous w.r.t.\ the norm topologies, and since $V_n$ is a discrete group $\rho_\pi$ is automatically continuous.  Then from Subsect.\ \ref{sect:rhoxorb} we concentrate on the family of representations $\{\rho_x\}_{x\in [0,1[}$ of $V_n$ (with $\rho_x:=\rho_{\pi_x}$ that arises from the 1-dimensional dynamical systems that underlies the interval maps $f(x)=nx$ (mod 1), with $n\in\mathbb{N}$. Here the Hilbert space is attached to the (generalized) orbit orb$(x)=\bigcup_{k\in\mathbb{Z}} f^k(x)$. In Theorem \ref{thm:bigs4} we show that the action $g.y:=g(y)$ is well defined for $g\in V_n$ and $y\in\hbox{orb}(x)$ and moreover the underlying representation of $V_n$ coincides with that of $\rho_x$ -- this is proved in several steps. Then we can use the embedding $\E: V_{k(n-1)+1}\to V_n$ and obtain a representation $\rhoxn\circ \E$ of  $V_{k(n-1)+1}$ where we denote by $\rhoxn$ the already introduced representation $\rho_x$ of $V_n$ on $\hxn:=H_x$. The unitary operator $U: \hxk\to U(\hxn)\subset \hxn$ introduced in Definition \ref{def:U} is used to prove that in fact $\rhoxn\circ \E$ and $\rhoxk$ are unitarily equivalent as in Theorem \ref{thm:Upi=pi'U}. Using a natural embedding $\iota^\prime: \pixnk(\mathcal{O}_{k(n-1)+1})\rightarrow B(\hxn)$ as in Definition \ref{def:iota'}, then using \cite{ar2008} we show in Corollary \ref{crl:pixy<->xy} that the representations $\iota^\prime\circ \pi_x$ and $\iota^\prime\circ \pi_y$ of $\mathcal{O}_{k(n-1)+1}$ are unitarily equivalent if and only if $\text{orb}(x) = \text{orb}(y)$. We then study in Subsect.\ \ref{sec: unitequiv} the more involved unitary equivalence and irreducibility of the Higman-Thompson groups representations ${\{\rho_x\}}_{x\in [0,1[}$, so that Theorem \ref{thm:Crho=piO} shows that the C*-algebra $C^*_{\rhoxn}(V_n)$ generated by ${\rhoxn}(V_n)$ equals the C*-algebra  $\pi_x(\mathcal{O}_n)$ on $H_x$, i.e.\
\[ C^*_{\rhoxn}(V_n) = \pi_x(\mathcal{O}_n) \]
where the non-amenability of the underlying actions plays a role as well as the recent embedding result of $V_2$ into $V_n$ as in \cite{birget}.
This leads to Theorem \ref{thm:rho<->xy} where we prove that $\rho_x$ and $\rho_y$ are equivalent representations of $V_n$ if and only if $\text{orb}(x) = \text{orb}(y)$, i.e.\
$$\rho_x \sim \rho_y\ \ \hbox{if and only if}\ \ \hbox{orb}(x)= \hbox{orb}(y),$$
and moreover they are irreducible.

\section{Preliminaries}
	\label{preliminaries}
	
	We start by defining the Cuntz algebras, a set of simple universal C*-algebras first introduced by Cuntz in \cite{cuntz}. The Cuntz algebra $\mathcal{O}_n$ is defined as the (universal) C*-algebra generated by the $n$ isometries $\{s_1,s_2,\ldots,s_n\}$ satisfying:
	\begin{equation}
		\label{def:CuntzAlg}
		\sum_{j=1}^{n}s_js_j^* = 1 \hspace*{30pt} s_i^*s_j = \delta_{ij}\ 1
		\ \hbox{for any i,j}\  \in \{1,2,...,n\}
	\end{equation}
where $1$ denotes de identity.
	
	Let $A$ be a C*-algebra. Given a Hilbert space $H$, we denote by $B(H)$ the C*-algebra of linear bounded operators in $H$, where the product of $B(H)$ is the composition of operators and $1$ also denotes the identity of $B(H)$. A representation of $A$ is a *-homomorphism $\pi:A\rightarrow B(H)$ and thus satisfies $\pi(a^\ast)=\pi(a)^\ast$ for $a\in A$. $\pi$ is also automatically continuous for the norm topologies.
The representation $\pi$ is said to be irreducible if it has no invariant subspaces, that is, if there is no non trivial subspace $K \subset H$ such that $\pi(K) \subset K$.
This is equivalent to the commutant of $\pi$ being $\mathbb{C}1$, that is, $k\pi(a) = \pi(a)k$ (for every $a \in A$) if and only if $k \in \mathbb{C}1$ (see for example \cite{pedersen})

	Given two Hilbert spaces $H_1$, $H_2$, we say that $U: H_1 \rightarrow H_2$ is unitary, if $UU^* = 1$ and $U^*U = 1$, where $U^*$ denotes the adjoint operator of $U$. This is equivalent to $U$ being onto, and satisfying $\langle Ux,Uy \rangle_{H_2} = \langle x,y \rangle_{H_1}$ for any $x,y \in H_1$. Given a C*-algebra $A$, the representations $\pi_1:A\rightarrow B(H_1)$ and $\pi_2:A\rightarrow B(H_2)$ are said to be unitarily equivalent if there exists a unitary $U: H_1\to H_2$ such that
\begin{equation}
U\pi_1(a) = \pi_2(a)U\  \hbox{for any}\ a \in A.
\end{equation}
	
	Given a group $G$ and a Hilbert space $H$, there are analogous properties for a representation $\rho$ of $G$, that is, a group homomorphism from $G$ to $B(H)$. We say that $\rho$ is irreducible if there is no non trivial subspace $K \subset H$ such that $(\pi(g))(K) \subset K$ for all $g \in G$. This is equivalent to $\rho'(G) = \mathbb{C}1$. We say that $\rho$ is unitary, if for any $g \in G$, $\rho(g^{-1}) = \rho(g)^*$. Finally, given two Hilbert spaces $H_1$ and $H_2$ not necessarily different, we say that the representations $\rho_{1}:G\rightarrow B(H_1)$, $\rho_{2}:G\rightarrow B(H_2)$ are unitarily equivalent, and write $\rho_{1} \sim \rho_{2}$, if there is a unitary operator $U: H_1 \rightarrow H_2$ such that $U\rho_1(g) = \rho_{2}(g)U$ for every $g \in G$. Given a representation $\rho$ of a discrete group $G$ in a Hilbert space $H$, let $C^*_{\rho}(G)$ denote the C*-subalgebra of $B(H)$ generated by $\rho(G)$ , so that
\begin{equation}
C^*_{\rho}(G) = \overline{\text{span}(\{\rho(g):g \in G\})}^{||\cdot ||_{B(H)}}.
\end{equation}
	
We now define the Higman-Thompson groups, see \cite{cfp,Higman}. There are several equivalent realizations of this groups. In this paper, we will consider them as linear piecewise maps in the interval $[0,1[$, and as tables. The relation between these is made explicit in section \ref{Representations of $V_n$ in $H$}. Fix a certain $n \geq 2$, and let $ M = \{\frac{a}{n^k}:  a,k \in \mathbb{N} \mbox{ and } 0 \leq a < n^k\}$. We now define the Higman-Thompson groups, as a particular case of the groups defined in \cite{pie}.
	
	\begin{definition}
		The Higman-Thompson group $V_n$ is the group of piecewise linear maps $g: [0,1[ \rightarrow [0,1[$ such that:
		\begin{enumerate}
			\item  $g$ is bijective in $[0,1[$.
			\item $g(M)=M$.
			\item $g'(x) = n^k$ for some $k$ in the points where it is differentiable.
			\item If $g$ is not differentiable in $x$, then $x \in M$.
		\end{enumerate}
		The Higman-Thompson group $T_n$ is the subset of $V_n$ such that $g$ has at most one discontinuity. The Higman-Thompson group $F_n$ is the subset of $T_n$ such that $g$ has no discontinuities.
\label{def:VTF}
\end{definition}
	
	We will now describe the realization of the Higman-Thompson groups as tables, which can also be found in e.g.\ \cite{Nek}. An alphabet $A$ is a finite set and its elements are called letters. We will denote by $A^*$ the free monoid generated by $A$, that is, the set of finite sequences $a_1a_2...a_m$ with $a_i \in A$.  The length of a  word  $w\in A^*$ is the number of letters occurring in $w$. For example, $w=a_1a_2a_2$ has length $3$.  Given $a\in A^*$, we will denote by $aX$ the set $\{ax: x \in X\}$. The set of words on $A$ of infinite length will be denoted by $A^{\omega}$.  Given $x \in A^*$, $y \in A^\omega\cup A^*$, we say that $x$ is a prefix of $y$, if there is a $z\in A^\omega\cup A^*$ such that $y = xz$.
	
	Let $A$ be an alphabet with $n$ letters. An admissible language $L:=\{a_1,\ldots,a_m\}$ is a subset of $A^*$ such that $A^\omega=\bigcup_{i=1}^{m} a_iA^{\omega}$ and no $a_i$ is a prefix for $a_j$, for all $i,j\in\{1,\ldots,m\}$. Symbolically, $a_iA^\omega \cap a_jA^\omega=\emptyset$, for  pairwise different $a_i$, $a_j$.
	
	We will be concerned with admissible transformations,
\begin{equation}	
g = \begin{bmatrix}
	a_1 & a_2 & \ldots & a_m \\
	b_1 & b_2 & \ldots & b_m
	\end{bmatrix},
\label{eq:tableV}
\end{equation}
where $\{a_1,\ldots,a_m\}$ and $\{b_1,\ldots,b_m\}$ are admissible languages.
	
	Any admissible transformation induces a permutation of  $A^\omega$ as follows: for all $w\in A^\omega$,  $T(w)=T(a_iu)=T(a_i)u$. This is well defined because for every $w\in A^\omega$ there is one and only one $a_i\in \{a_1,\ldots, a_m\}$ and $u\in A^\omega$ such that $w=a_iu$.
	
		The composition of two admissible transformations is admissible.
		Furthermore, the identity transformation on an admissible set induces the identity permutation on $A^\omega$, and switching the rows in $T$ yields the inverse permutation.  It is worth observing  that for any admissible transformation there is an infinite number of tables one can associate to it.
	
	The Higman-Thompson group $V_n$, is the group of all admissible transformations  of a $n$-letters alphabet. We may assume, without loss of generality, that $b_1 < b_2  < \ldots < b_m$, where $\le$ is the lexicographic order. In order for a table to be associated to a map that belongs to $T_n$, we additionally need to have: $a_i < a_{i+1}  < \ldots < a_m < a_1 < \ldots < a_{i-1}$ for some $i \in \{1,2,\ldots,m\}$. For the table to be associated to a map that belongs to $F_n$, we need to have $i = 1$.

\section{Embeddings of the Higman-Thompson's groups}
\label{Embeddings of the Higman-Thompson's groups}
	
	In this section, we use some known embeddings of the Cuntz Algebras to prove some embedding results for the Higman-Thompson groups thus retrieving a result by Higman (see \cite{Higman}). These embeddings will be critical in Section \ref{Representations of $V_n$ and Embeddings}.
	
We now setup an embedding of the Cuntz algebras which can be traced back in \cite{cuntz}.
For completeness we provide its proof.
	
	\begin{lemma}
		\label{thm:OninO2gen}
		Given any integer $k \geq 1$, the Cuntz Algebra $\mathcal{O}_{k(n-1)+1}$ is embedded in $\mathcal{O}_{n}$. An embedding is the map $\iota: \mathcal{O}_{k(n-1)+1}\rightarrow \mathcal{O}_{n}$ satisfying:		
		\[ \iota(\hat{s_1}) = s_1^k \hspace*{30pt} \iota(\hat{s}_{1+i(n-1)+(j-1)}) = \iota(\hat{s}_{i(n-1)+j}) = s_1^{k-i}s_j \]
		for $0\leq i < k$, $2 \leq j \leq n$.
	\end{lemma}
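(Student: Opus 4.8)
The plan is to invoke the universal property of $\mathcal{O}_{k(n-1)+1}$: since this algebra is the universal C*-algebra on generators $\hat{s}_1,\ldots,\hat{s}_{k(n-1)+1}$ subject to the Cuntz relations, a $*$-homomorphism $\iota$ is determined by, and exists as soon as, the prescribed images $t_a:=\iota(\hat{s}_a)$ satisfy those same relations inside $\mathcal{O}_n$. Thus the whole construction reduces to checking that the family $\{t_a\}$ consists of isometries with pairwise orthogonal ranges summing to $1$.

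Each $t_a$ is a word $s_{i_1}\cdots s_{i_p}$ in the generators of $\mathcal{O}_n$, hence an isometry, because $s_i^*s_i=1$ yields $(s_{i_1}\cdots s_{i_p})^*(s_{i_1}\cdots s_{i_p})=1$ by cancelling from the inside out. For orthogonality I would first record the elementary rule that, for two such words $w$ and $v$, the relation $s_i^*s_j=\delta_{ij}1$ forces $w^*v=0$ unless one of $w,v$ is a prefix of the other. The images are the word $s_1^k$ together with the words $s_1^m s_j$ ($2\le j\le n$) for $m$ ranging over $0,\ldots,k-1$; these form a prefix code, since two words of the form $s_1^m s_j$ first differ either in the number of leading $s_1$'s or in the letter $s_j\neq s_1$, while $s_1^k$ ends in $s_1$ and every other word ends in some $s_j$ with $j\ge 2$. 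No listed word is therefore a prefix of another, so $t_a^*t_b=\delta_{ab}1$.

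The substantive point is the sum relation $\sum_a t_at_a^*=1$, which I would prove by a telescoping argument on the projections $P_m:=s_1^m(s_1^m)^*$, with the convention $P_0=1$. Applying $\sum_{j=1}^n s_js_j^*=1$ conjugated by $s_1^m$ gives
\[
P_m=s_1^m\Big(\sum_{j=1}^n s_js_j^*\Big)(s_1^m)^*=P_{m+1}+\sum_{j=2}^n s_1^m s_j s_j^*(s_1^m)^*,
\]
so that $\sum_{j=2}^n s_1^m s_js_j^*(s_1^m)^*=P_m-P_{m+1}$. Summing this identity over $m=0,\ldots,k-1$ telescopes to $P_0-P_k=1-s_1^k(s_1^k)^*$, whence
\[
s_1^k(s_1^k)^*+\sum_{m=0}^{k-1}\sum_{j=2}^n s_1^m s_js_j^*(s_1^m)^*=1,
\]
which is precisely $\sum_a t_at_a^*=1$. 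This telescoping is the main obstacle, in the sense that it is where the precise ``caterpillar'' arrangement of the words (peeling off $n-1$ branches at each of the $k$ levels) has to be matched against the index set; once the prefix structure is made explicit, the isometry and orthogonality checks are routine.

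Finally, to upgrade the $*$-homomorphism $\iota$ to an embedding I would use that $\mathcal{O}_{k(n-1)+1}$ is simple \cite{cuntz}: a $*$-homomorphism out of a simple C*-algebra is either zero or injective, and here $\iota(\hat{s}_1)=s_1^k\neq 0$, so $\iota$ is injective. Being an injective $*$-homomorphism between C*-algebras, $\iota$ is automatically isometric, and hence realizes $\mathcal{O}_{k(n-1)+1}$ as a C*-subalgebra of $\mathcal{O}_n$.
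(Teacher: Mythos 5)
Your proof is correct and follows essentially the same route as the paper: both reduce the lemma to verifying the Cuntz relations for the prescribed images, and your telescoping identity $\sum_{j\ge 2}s_1^m s_j s_j^*(s_1^*)^m = P_m - P_{m+1}$ summed over $m=0,\dots,k-1$ is just the unrolled form of the paper's induction on $k$, while your prefix-code check of orthogonality and the appeal to simplicity for injectivity merely fill in steps the paper leaves as ``easily verified'' or defers to the proof of Theorem \ref{thm:EmbFinT}. The only point worth flagging is that you (correctly) took the words to be $s_1^k$ together with $s_1^m s_j$ for $m=0,\dots,k-1$, which is what the paper's displayed sum relation actually establishes, whereas the literal index range $0\le i<k$ with image $s_1^{k-i}s_j$ in the statement would put $s_1^k s_j$ alongside $s_1^k$ and break orthogonality --- an off-by-one in the statement of the lemma, not a gap in your argument.
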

	
	\begin{proof}
		We must prove that the set $\{\iota(\hat{s_i}): 1\leq i \leq k(n-1)+1\}$ satisfy the Cuntz relations. We start by proving that:
		\[ s_1^k(s_1^*)^k + \sum_{i=1}^{k}\sum_{j=2}^{n} s_1^{k-i}s_js_j^*(s_1^*)^{k-i} = 1 \]
		
		We prove the result by induction in $k$. For $k = 1$ the result comes from the definition of $\mathcal{O}_n$. Now, suppose the result is true for $k$. Then: 	
		\[\begin{array}{lll}
		& & \displaystyle s_1^{k+1}(s_1^*)^{k+1} + \sum_{i=1}^{k+1}\sum_{j=2}^{n} s_1^{k+1-i}s_js_j^*(s_1^*)^{k+1-i}\\
		     & = & \displaystyle s_1s_1^{k}(s_1^*)^{k}s_1^* + \sum_{i=1}^{k}\sum_{j=2}^{n} s_1s_1^{k-i}s_js_j^*(s_1^*)^{k-i}s_1^* + \sum_{j=2}^{n}s_js_j^* \\
		     & = & \displaystyle s_1\left(s_1^{k}(s_1^*)^{k} + \sum_{i=1}^{k}\sum_{j=2}^{n} s_1^{k-i}s_js_j^*(s_1^*)^{k-i}\right)s_1^* + \sum_{j=2}^{n}s_js_j^* \\
		     & = & \displaystyle s_1s_1^* + \sum_{j=2}^{n}s_js_j^* = 1.
		\end{array}
		  \]
		One can easily verify that $\iota(\hat{s_i})\iota(\hat{s_j}) = \delta_{ij}$
	\end{proof}
	
	We are now going to use the Cuntz algebra embedding $\iota$ of Lemma \ref{thm:OninO2gen} to yield an embedding of the Higman-Thompson groups. To do this, we will use the map $\Psi_n :V_n \rightarrow \mathcal{O}_n$ defined as follows
\begin{equation}	
 \Psi_n(g) = \Psi\left(
	\begin{bmatrix}
	a_1 & a_2 & \ldots & a_m \\
	b_1 & b_2 & \ldots & b_m
	\end{bmatrix}\right)
	 = s_{a_1}s^*_{b_1} + s_{a_2}s^*_{b_2} + \ldots + s_{a_m}s^*_{b_m}.
\label{eq:Psi}
\end{equation}
as defined in \cite{Nek}, where $g\in V_n$ is represented by a table as explained in \eqref{eq:tableV}.
    In \cite{Nek}, it is proven that $\Psi$ is a faithful unitary representation of $V_n$ in $\mathcal{O}_n$. We now check that $\iota$ maps $\Psi_{k(n-1)+1}(V_{k(n-1)+1})$ to $\Psi_n(V_n)$.

Using this, we explain how to define a group embedding $\E:V_{k(n-1) +1}\rightarrow V_n$  such that the following diagram commutes
    \[ \begin{tikzcd}
    \Psi_{k(n-1)+1}(V_{k(n-1)+1} \arrow{r}{\iota})  & \Psi_n(V_n) \arrow{d}{\Psi_n^{-1}} \\%
    V_{k(n-1)+1}\arrow[swap]{u}{\Psi_{k(n-1)+1}} \arrow[swap]{r}{\E}& V_n
    \end{tikzcd}
    \]
and then study the corresponding Higman-Thompson group embeddings.

    To do this, we need to define some auxiliary maps. In this section, we will denote by $X$ and $Y$ the alphabets $X = \{1,2,...,n\}$ and $Y = \{1,2,...,k(n-1)+1\}$. In these, we then define a lexicographic order as it should be expected. Given two letters, $i$ and $j$, we write $i<j$ if the natural number i is less than the natural number j. Then, given $v,w \in X^*$, we write $v<w$ if there is a $j$ such that $v_j < w_j$, and $v_i = w_i$ for all $i<j$. With this, we define our auxiliary functions.
	
	\begin{definition}
		\label{def:gamma,f}
		Let $\gamma:Y \rightarrow X^*$ be the map such that, for all $y \in Y$:
		\[ \iota(\hat{s}_y) = s_{\gamma(y)} \]
		Also,  let $f:Y^*\rightarrow X^*$ be such that $$f(u) = f(u_1u_2\ldots u_m) = \gamma(u_1)\gamma(u_2)\ldots\gamma(u_m).$$
		That is, the map such that:
		\[ \iota(\hat{s}_u) = \iota(\hat{s}_{u_1u_2\ldots u_m}) = \iota(\hat{s}_{u_1})\ldots\iota(\hat{s}_{u_m}) = s_{\gamma(u_1)}\ldots s_{\gamma(u_m)} = s_{\gamma(u_1)\gamma(u_2)\ldots\gamma(u_m)} = s_{f(u)} \]
	\end{definition}

	So, for example, since $\iota(\hat{s_1}) = \underbrace{s_1\ldots s_1}_{k \text{ times}}$, we have $\gamma(1) = \underbrace{(1)\ldots(1)}_{k \text{ times}}$. We are now in a position to describe our embedding.
	
We define $\E = \Psi_n \circ \iota \circ \Psi_{k(n-1)+1}$ such that
\begin{equation}
\E:\begin{bmatrix}
		u_1 & u_2 & \ldots & u_m \\
		v_1 & v_2 & \ldots & v_m
		\end{bmatrix} \mapsto
		\begin{bmatrix}
		f(u_1) & f(u_2) & \ldots & f(u_m) \\
		f(v_1) & f(v_2) & \ldots & f(v_m)
		\end{bmatrix}.
\label{eq:E}
\end{equation}
	
In order to show that $\iota$ maps $\Psi_{k(n-1)+1}(V_{k(n-1)+1})$ to $\Psi_n(V_n)$, and thus that $\E$ is well defined,  we need to prove that $f$ maps admissible languages in $Y^*$ to admissible languages in $X^*$, that is
	\[Y^{\omega} = \bigcup_{i=1}^{m} u_iY^{\omega} = \bigcup_{i=1}^{m} v_iY^{\omega} \Rightarrow X^{\omega} = \bigcup_{i=1}^{m} f(u_i)X^{\omega} = \bigcup_{i=1}^{m} f(v_i)X^{\omega}\]
	and $u_iY^{\omega}\cap u_jY^{\omega} = \emptyset$ implies that $f(u_i)Y^{\omega}\cap f(u_j)Y^{\omega} = \emptyset $. Furthermore, in order to prove that $\E$  is also an embedding of  $T_{1+k(n-1)}$ and $F_{1+k(n-1)}$ in $T_{n}$ and $F_{n}$ respectively, one must show that $f$ preserves the lexicographic order, that is, $ a < b \Rightarrow f(a) < f(b)$, since this implies that:
	\begin{eqnarray*}
	  u_i < u_{i+1}  < \ldots < u_m < u_1 < \ldots < u_{i-1} \Rightarrow f(u_i) < f(u_{i+1})  < \ldots\\
	< f(u_m) < f(u_1) < \ldots < f(u_{i-1})\ \ \hbox{for all} \ i.
	\end{eqnarray*}
	
	We prove an auxiliary result.
	
	\begin{lemma}		
\label{lma:f(a)=f(b)->a=b}
${}_{}$\par
\begin{enumerate}
	\item	$f$ is injective.
\item If $f(a)$ is a prefix of $f(b)$, then $a$ is a prefix of $b$.
\end{enumerate}
	\end{lemma}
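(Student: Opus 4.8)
The whole statement rests on one combinatorial feature of the map $\gamma$: its image $\gamma(Y) = \{\gamma(y) : y \in Y\}$ is a \emph{prefix code}, meaning that $\gamma$ is injective and no codeword $\gamma(y)$ is a prefix of a different codeword $\gamma(y')$. This is immediate from the explicit form of $\iota$, since each $\gamma(y)$ is either the word $1^k$ or a word $1^m j$ with $2 \le j \le n$ and $0 \le m \le k-1$, and the position of the first letter different from $1$ shows these words are pairwise distinct and pairwise incomparable under the prefix order. Equivalently, this prefix-freeness is exactly the content of the orthogonality relations $\iota(\hat s_y)^\ast \iota(\hat s_{y'}) = \delta_{y y'}$ established in Lemma \ref{thm:OninO2gen}, because for words $v, w \in X^\ast$ one has $s_v^\ast s_w = 0$ if and only if neither of $v, w$ is a prefix of the other.

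The plan is to prove (2) first and then read off (1). For (1): if $f(a) = f(b)$, then $f(a)$ is a prefix of $f(b)$ and $f(b)$ is a prefix of $f(a)$, so (2) gives that $a$ is a prefix of $b$ and $b$ is a prefix of $a$, forcing $a = b$. For (2) I would induct on the length $p$ of $a = a_1 \cdots a_p$. The case $p = 0$ is trivial, the empty word being a prefix of everything. For $p \ge 1$, write $b = b_1 \cdots b_q$; since $f(a)$ is a nonempty prefix of $f(b)$, necessarily $q \ge 1$, and by definition $f(a) = \gamma(a_1)\, f(a_2 \cdots a_p)$ and $f(b) = \gamma(b_1)\, f(b_2 \cdots b_q)$.

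The key step is that $\gamma(a_1)$ and $\gamma(b_1)$ are both prefixes of the single word $f(b)$: indeed $\gamma(b_1)$ is a prefix of $f(b)$ by construction, while $\gamma(a_1)$ is a prefix of $f(a)$, which by hypothesis is a prefix of $f(b)$. Since two words that are prefixes of a common word are themselves comparable under the prefix order (the shorter being a prefix of the longer), one of $\gamma(a_1), \gamma(b_1)$ is a prefix of the other; prefix-freeness of $\gamma(Y)$ then forces $\gamma(a_1) = \gamma(b_1)$, and injectivity of $\gamma$ yields $a_1 = b_1$. Cancelling this common first codeword leaves $f(a_2 \cdots a_p)$ as a prefix of $f(b_2 \cdots b_q)$, so the induction hypothesis gives that $a_2 \cdots a_p$ is a prefix of $b_2 \cdots b_q$; prepending $a_1 = b_1$ shows $a$ is a prefix of $b$, completing the induction.

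I expect the only real content to lie in the prefix-code property of $\gamma(Y)$; once that ``instantaneous decoding'' feature is secured, the induction is routine, powered solely by the elementary fact that two prefixes of a common string are nested. The only point demanding care is the bookkeeping in the inductive step---checking that $b$ is nonempty and cancelling exactly one codeword from each side---but this presents no genuine obstacle beyond confirming the combinatorial shape of the embedding $\iota$.
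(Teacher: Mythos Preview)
Your proof is correct and rests on the same key fact as the paper's---that $\gamma(Y)$ is a prefix code---but organises the argument differently. The paper proves (1) first (by looking at the first index where $a$ and $b$ differ) and then proves (2) by induction on the length of $b$, invoking (1) inside the inductive step to handle the case $f(a)=f(b)$. You instead induct on the length of $a$, which makes the base case trivial and lets you cancel one codeword at a time without ever needing (1); you then recover (1) as an immediate corollary of (2). Your route is slightly more economical, since it avoids a separate argument for injectivity, while the paper's route keeps the two parts logically independent. Either way the substance is the same unique-decodability argument for a prefix code.
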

	
	\begin{proof}
		(1) We will prove that if $a \neq b$, then $f(a) \neq f(b)$. Start by noticing that for all $i,j \in Y$, $\gamma(i)$ is not a prefix of $\gamma(j)$ if $i \neq j$ and that $\gamma$ is injective. Suppose $a \neq b$. Then let $j$ be the first index such that $a_j \neq b_j$. It follows that $\gamma(a_j) \neq \gamma (b_j)$. We have that $f(a) = \gamma(a_1)\gamma(a_2)\ldots\gamma(a_j)\ldots$ and $f(b) = \gamma(b_1)\gamma(b_2)\ldots\gamma(b_j)\ldots$. If $f(a) = f(b)$, then we would need to have that $\gamma(a_j)$ is a prefix of $\gamma(b_j)$ or vice versa, which is impossible.

(2) We prove the result by induction on the length of $b$. If $b \in Y$, $f(a) = \gamma(a)$ and $f(b) = \gamma(b)$, so if $f(a)$ is a prefix of $f(b)$, $\gamma(a) = \gamma(b)$ and thus $a = b$, from which it follows that $a$ is a prefix of $b$. Now, suppose that $b$ has length $n$. We either have that $f(a) = f(b)$, or that $f(a)$ is a prefix of $f(b_1\ldots b_{n-1})$. If $f(a) = f(b)$, then by  part (1), we have that $a = b$. If $f(a)$ is a prefix of $f(b_1\ldots b_{n-1})$, then by the induction hypothesis we have that $a$ is a prefix of $b_1\ldots b_{n-1}$, and hence a prefix of $b$.
	\end{proof}

We have now the tools to prove the requested Higman-Thompson group embeddings.
	
\begin{theorem}
\label{thm:EmbFinT}
The map $\E$ is an embedding of $V_{1+k(n-1)}$, $T_{1+k(n-1)}$ and $F_{1+k(n-1)}$ in
$V_n$, $T_{n}$ and $F_{n}$ respectively.
\end{theorem}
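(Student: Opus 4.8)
The plan is to reduce the entire statement to two properties of the block-substitution map $f$ of Definition \ref{def:gamma,f}: that it carries admissible languages in $Y^*$ to admissible languages in $X^*$, and that it preserves the lexicographic order. Once these are in hand the group-theoretic content is essentially formal. Indeed, $\E$ is the composite $\Psi_n^{-1}\circ\iota\circ\Psi_{k(n-1)+1}$, in which $\Psi_{k(n-1)+1}$ is a faithful unitary representation (hence an injective group homomorphism onto its image, by \cite{Nek}), $\iota$ is an injective unital $*$-homomorphism by Lemma \ref{thm:OninO2gen}, and $\Psi_n^{-1}$ is the inverse of the faithful representation $\Psi_n$ on $\Psi_n(V_n)$. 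A composite of injective homomorphisms is an injective homomorphism, so the only thing to check for $\E\colon V_{1+k(n-1)}\to V_n$ to be a well-defined embedding is that $\iota$ really lands inside $\Psi_n(V_n)$; by \eqref{eq:Psi} and \eqref{eq:E} this is exactly the assertion that, whenever $\{u_1,\dots,u_m\}$ is admissible in $Y^*$, the family $\{f(u_1),\dots,f(u_m)\}$ is admissible in $X^*$.

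I would then establish admissibility preservation. First record that $\{\gamma(y)\}_{y\in Y}$ is itself an admissible language in $X^*$: applying the unital $*$-homomorphism $\iota$ to the Cuntz relation $\sum_{y}\hat{s}_y\hat{s}_y^*=1$ gives $\sum_y s_{\gamma(y)}s_{\gamma(y)}^*=1$, which is precisely the covering condition $X^\omega=\bigcup_y\gamma(y)X^\omega$, while orthogonality of the ranges gives the disjointness. Disjointness of the image language is then immediate from Lemma \ref{lma:f(a)=f(b)->a=b}(2): if $f(u_i)X^\omega\cap f(u_j)X^\omega\neq\emptyset$ then one of $f(u_i),f(u_j)$ is a prefix of the other, whence $u_i$ is a prefix of $u_j$ or conversely, contradicting $u_iY^\omega\cap u_jY^\omega=\emptyset$; together with injectivity of $f$ (Lemma \ref{lma:f(a)=f(b)->a=b}(1)) this yields pairwise disjoint cylinders. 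For the covering condition I would extend $f$ to $Y^\omega\to X^\omega$ by $f(y_1y_2\cdots)=\gamma(y_1)\gamma(y_2)\cdots$ and use that $\{\gamma(y)\}$ is a complete prefix code to parse an arbitrary $w\in X^\omega$ uniquely as $w=f(\tilde w)$ for some $\tilde w\in Y^\omega$; since $\{u_i\}$ covers $Y^\omega$ we have $\tilde w=u_{i_0}\tilde w'$ for some $i_0$, and then $w=f(u_{i_0})f(\tilde w')\in f(u_{i_0})X^\omega$. Hence $X^\omega=\bigcup_i f(u_i)X^\omega$, so $\{f(u_i)\}$ is admissible; the same argument for $\{v_1,\dots,v_m\}$ shows that \eqref{eq:E} sends $V_{1+k(n-1)}$-tables to $V_n$-tables, and $\E$ is an embedding of $V_{1+k(n-1)}$ in $V_n$.

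For the $T$ and $F$ statements I would appeal to the table characterization of $T_n$ and $F_n$ from Section \ref{preliminaries}, where membership is detected by the lexicographic arrangement of the rows (bottom row strictly increasing, top row cyclically increasing for $T_n$ and strictly increasing for $F_n$). Since $\E$ acts on a table by applying $f$ entrywise, it suffices to show $f$ is strictly order preserving on the non-prefix words occurring in an admissible language, i.e.\ $a<b\Rightarrow f(a)<f(b)$. I would verify from the explicit formula for $\gamma$ that $\gamma$ is order preserving on letters with pairwise non-prefix values, and then lift this to words: if $j$ is the first index at which $a$ and $b$ differ, then $f(a)$ and $f(b)$ share the common prefix $\gamma(a_1)\cdots\gamma(a_{j-1})$ and thereafter differ within the block $\gamma(a_j)$ versus $\gamma(b_j)$, where $a_j<b_j$ forces $\gamma(a_j)<\gamma(b_j)$; as these blocks are non-prefix, the first letter at which they disagree is smaller in $f(a)$, giving $f(a)<f(b)$. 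Order preservation then propagates the defining inequalities of $T_{1+k(n-1)}$ and $F_{1+k(n-1)}$ to their images, so $\E$ restricts to embeddings $T_{1+k(n-1)}\hookrightarrow T_n$ and $F_{1+k(n-1)}\hookrightarrow F_n$.

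I expect the main obstacle to be the covering half of admissibility preservation, since it is the one step genuinely using the completeness of $\{\gamma(y)\}$ as a prefix code --- equivalently, the unitality of $\iota$ and the full Cuntz relation of Lemma \ref{thm:OninO2gen} --- rather than the purely combinatorial Lemma \ref{lma:f(a)=f(b)->a=b}. A secondary subtlety is making the lexicographic comparison rigorous for blocks $\gamma(a_j),\gamma(b_j)$ of differing lengths, which is exactly why one must exploit that distinct values of $\gamma$ are pairwise non-prefix.
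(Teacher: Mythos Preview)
Your proposal is correct and follows essentially the same approach as the paper: reduce everything to showing that $f$ carries admissible languages to admissible languages and preserves the lexicographic order, with disjointness coming from Lemma \ref{lma:f(a)=f(b)->a=b}(2), injectivity from simplicity of $\mathcal{O}_{k(n-1)+1}$ and faithfulness of $\Psi$, and order-preservation via the explicit description of $\gamma$ on letters lifted to words. The only cosmetic difference is in the covering step: you extend $f$ to $Y^\omega\to X^\omega$ and parse an arbitrary $w\in X^\omega$ as $f(\tilde w)$ in one stroke, whereas the paper parses $w$ through the prefix code $\{\gamma(y)\}$ only finitely many times (one more than the maximal length $r$ of the $u_i$) and then invokes the prefix property of the $u_i$; both rely on exactly the same ingredient, namely that $\{\gamma(y)\}_{y\in Y}$ is a complete prefix code, which is the content of the Cuntz relation pushed through $\iota$.
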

	
	\begin{proof}
		We start by showing that $\E$ is well defined by proving that $\iota$ (see Lemma \ref{thm:OninO2gen}) maps $\Psi_{k(n-1)+1}(V_{k(n-1)+1})$ to $\Psi_n(V_n)$.
	
	 Let $r$ denote the length of the biggest $u_i$. Let $y \in Y^*$ be any word of length more than $r$. To $y$ add an infinite number of $1$s. This new word  must have as prefix one of the $u_j$. As $y$ is larger than any $u_i$ it follows that $u_j$ is a prefix of $y$.  Now, notice that
		\[ X^{\omega} = \bigcup_{i=1}^{1+k(n-1)}f(i)X^{\omega}. \]
		
		Let $x \in X^{\omega}$. By the result above, we have that $x = f(w_1)t_1$, $t_1 \in X{^\omega}$, $w_1\in Y$. Repeating the steps $r$ times we conclude $x = f(w_1)\ldots f(w_{r+1})t_{r+1} = f(w_1 \ldots w_{r+1})t_{r+1} = f(w)t_{r+1}$. Since $w \in Y^*$ is a word of size $r+1$, we conclude it must have a prefix $u_i$. Thus, $f(w)$ must have a prefix $f(u_i)$ and thus $x \in f(u_i)X^{\omega}$. It follows that		
		\[ X^{\omega} = \bigcup_{i=1}^{m} f(u_i)X^{\omega}.\]
		Now we claim that if $i \neq j$, then $f(u_i)X^{\omega} \cap f(u_j)X^{\omega} = \emptyset$. We have that $u_iX^{\omega} \cap u_jX^{\omega} = \emptyset$ and that $aX^{\omega} \cap bX^{\omega} \neq \emptyset$ is the same as saying that $a$ is a prefix of $b$ or vice versa. Suppose $f(u_i)X^{\omega} \cap f(u_j)X^{\omega} \neq \emptyset$. Then, we must have that $f(u_i)$ is a prefix of $f(u_j)$, or vice versa. Suppose without loss of generality, that $f(u_i)$ is a prefix of $f(u_j)$. By Lemma \ref{lma:f(a)=f(b)->a=b}, we conclude that $u_i$ is a prefix of $u_j$. But this implies that $u_iX^{\omega} \cap u_jX^{\omega} \neq \emptyset$, a contradiction.	
		
	   Given that every Cuntz algebra is simple, $\iota$ must be an injective map. Given that $\Psi_m$ is a faithful representation, it is also an injective map for any $m$. Considering $\iota$ as a group homomorphism, we conclude that $\E = \Psi_n \circ \iota \circ \Psi_{k(n-1)+1}$ is an injective homomorphism from $V_{k(n-1)+1}$ to $V_n$.
       It remains to show that $f$ preserves the lexicographic order, and therefore $\E$ is also an embedding for $T_n$ and $F_n$.  First we show that $f$ preserves the lexicographic order of the letters, and then that of words. Suppose $a$ and $b$ are letters in $Y$. Then, $f(a) = \gamma(a)$. We have two cases. If $a = i(n-1)+ \alpha$, $b = i(n-1)+ \beta$, then
		\[ a<b \Rightarrow \alpha < \beta \Rightarrow \underbrace{(1)\ldots(1)}_{(k-i) \text{ times}}(\alpha) < \underbrace{(1)\ldots(1)}_{(k-i) \text{ times}}(\beta) \Rightarrow f(a) < f(b). \]
		Else, if $a = i(n-1)+ \alpha$, $b = j(n-1)+ \beta$, then
		\[ a<b \Rightarrow i < j \Rightarrow \underbrace{(1)\ldots(1)}_{(k-i) \text{ times}}(\alpha) < \underbrace{(1)\ldots(1)}_{(k-j) \text{ times}}(\beta) \Rightarrow f(a) < f(b). \]
		Let $w$ and $z$ be words with letters in $Y$. We can write $w = w_1\ldots w_m$ and $z = z_1\ldots z_l$. Then, by definition, $w<z$ means that there is a $j$ such that $w_i = z_i$ for any $i<j$, and that $w_j < z_j$. Thus, $\gamma(w_i) = \gamma(z_i)$ for all $i<j$, and $\gamma(w_j) < \gamma(z_j)$. Therefore, $\gamma(w_1)\ldots\gamma(w_{j-1})\gamma(w_j)\ldots < \gamma(z_1)\ldots\gamma(z_{j-1})\gamma(z_j)\ldots$ which implies that $f(u) < f(v)$.
	\end{proof}
	
	For other proof of the embedding $V_{k(n-1)+1}$ into $V_{n}$, see Theorem 7.2 of \cite{Higman}. If we use \cite{Nek} to represent the elements of $V_n$ as unitaries $g\in \mathcal{O}_n$ in the algebraic form $g=\sum_{i=1}^m s_{a_i}s_{b_i}^\ast$ as in \eqref{eq:E}, then it is straightforward to see that $V_{k(n-1)+1}$ into $V_{n}$ when using the restriction of the morphism $\iota$ of Lemma \ref{thm:OninO2gen} to unitaries of $\mathcal{O}_n$ in such algebraic form.

 We further remark that given any $n,m \geq 2$, we have that there is an embedding from $V_n$ to $V_m$. This is so because Theorem \ref{thm:EmbFinT} implies that $V_n$ embedds in $V_2$ and then a recent result \cite{birget} shows that $V_2$ can be embedded in $V_m$.

Given any $n,m \geq 2$ it is also known that there are quasi-isometric embeddings from $F_n$ to $F_m$, as proven in \cite{embeddingsF}. Similarly, in \cite{embeddingsT}, it is proven that there is a quasi-isometric embedding from $T_{k(n-1) +1}$ to $T_n$, but that there are no embeddings from $T_2$ to $T_n$. Taking into account the foregoing considerations, one may ask which results applies to Cuntz algebras. For that we remark that, as stated in \cite{kawamura}, if there is a non trivial embedding from $\mathcal{O}_m$ to $\mathcal{O}_{n}$, then $m$ must be equal to $k(n-1)+1$, for some $k \geq 1$.

\subsection{Higman-Thompson groups as tables}
\label{HTtables}

	Let $n \in \mathbb{N}$ and fix the alphabet $X = \{1,\ldots,n\}$. In this section, we will clarify the relation between the Higman-Thompson groups as piecewise linear maps and as tables. We start by defining the following map.
	
	\begin{definition}
		\label{def:phi(x)}
		Let $\phi:X^*\rightarrow \mathcal{P}$ be:
		\[ \phi(u) = \phi(u_1u_2\ldots u_m) = \left[\sum_{i=1}^{m}\frac{u_i-1}{n^i},\sum_{i=1}^{m}\frac{u_i-1}{n^i} + \frac{1}{n^m}\right[ \]
		where $\mathcal{P} = \left\{\left[ \displaystyle \frac{a}{n^k},\frac{a+1}{n^k}\right[: a,k \in \mathbb{N}, 0 \leq a < n^k\right\}$.
	\end{definition}
We start by proving the following lemma.

	\begin{lemma}
		\label{lma:phi_bijective}
${}_{}$\par
\begin{enumerate}
\item $\phi$ is bijective.
\item Let $a,b \in X^*$. Then, $\phi(b) \subset \phi(a)$ if and only if $a$ is a prefix of $b$.
\end{enumerate}
	\end{lemma}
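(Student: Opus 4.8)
The plan is to reinterpret $\phi$ through base-$n$ expansions. For a word $u = u_1\ldots u_m \in X^*$, setting $d_i = u_i - 1 \in \{0,\ldots,n-1\}$ and $a = \sum_{i=1}^m d_i n^{m-i}$, I observe that the left endpoint $\sum_{i=1}^m (u_i-1)/n^i$ equals $a/n^m$, so that $\phi(u) = [a/n^m, (a+1)/n^m[$ with $0 \le a < n^m$. Thus $\phi$ sends a length-$m$ word to a level-$m$ $n$-adic interval, and the length of $\phi(u)$ equals $n^{-|u|}$, which recovers $|u|$ from the interval itself. I would record two structural facts at the outset: each interval in $\mathcal{P}$ has a unique representation $[a/n^k, (a+1)/n^k[$ (its length $1/n^k$ determines $k$, and then $a$ is determined by the left endpoint), and for each fixed $k$ the intervals $\{[c/n^k, (c+1)/n^k[ : 0 \le c < n^k\}$ partition $[0,1[$. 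As a consequence, two $n$-adic intervals of the same level are either equal or disjoint.

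For part (1), injectivity follows because $\phi(u) = \phi(v)$ forces equal lengths, hence $|u| = |v| = m$, and then equal left endpoints give $\sum_{i=1}^m (u_i - v_i)/n^i = 0$; multiplying by $n^m$, this is precisely the uniqueness of the $m$-digit base-$n$ representation of the integer $a$, so $u_i = v_i$ for every $i$. Surjectivity follows by reversing the correspondence: given $[a/n^k, (a+1)/n^k[ \in \mathcal{P}$, expand $a$ in base $n$ with exactly $k$ digits $d_1,\ldots,d_k$ (padding with leading zeros, which is legitimate since $a < n^k$) and set $u_i = d_i + 1$; then $\phi(u_1\ldots u_k)$ is the prescribed interval.

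For part (2), the $\Leftarrow$ direction becomes transparent once I note the affine self-similarity: if $a$ has length $p$ and $b = ac$, then $\phi(b) = \ell_a + n^{-p}\phi(c)$, where $\ell_a$ denotes the left endpoint of $\phi(a)$ and the right-hand side is the image of $\phi(c)$ under $x \mapsto \ell_a + n^{-p}x$. Since every $\phi(c) \subseteq [0,1[$, this image lies in $[\ell_a, \ell_a + n^{-p}[ = \phi(a)$, giving $\phi(b) \subseteq \phi(a)$. For the $\Rightarrow$ direction, from $\phi(b) \subseteq \phi(a)$ I get $|a| \le |b|$ by comparing lengths; writing $b'$ for the length-$|a|$ prefix of $b$, the $\Leftarrow$ direction gives $\phi(b) \subseteq \phi(b')$, so the nonempty interval $\phi(b)$ lies inside both $\phi(a)$ and $\phi(b')$. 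These are $n$-adic intervals of the same level $|a|$, so by the partition fact they must coincide, $\phi(a) = \phi(b')$, and injectivity from part (1) yields $a = b'$, i.e. $a$ is a prefix of $b$.

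The main obstacle here is bookkeeping rather than anything conceptual: I must keep track of the shift between base-$n$ digits in $\{0,\ldots,n-1\}$ and alphabet letters in $\{1,\ldots,n\}$, and ensure that leading zeros are permitted so that the $k$-digit expansion of every $a < n^k$ exists and is unique. The single load-bearing step is the partition/disjointness property of same-level $n$-adic intervals, which is exactly what upgrades ``shares a point'' to ``equal'' in the $\Rightarrow$ direction; everything else reduces to the uniqueness of finite base-$n$ expansions.
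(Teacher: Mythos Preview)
Your proof is correct and follows essentially the same approach as the paper: both use the length $n^{-|u|}$ to recover $|u|$, uniqueness of base-$n$ expansions for injectivity, the digit-shift $u_i = d_i + 1$ for surjectivity, and the partition of $[0,1[$ by level-$m$ $n$-adic intervals to handle the $\Rightarrow$ direction of (2). The only cosmetic differences are that you package the $\Leftarrow$ direction via the affine identity $\phi(ac) = \ell_a + n^{-|a|}\phi(c)$ where the paper writes out the endpoint inequalities directly, and you argue $\Rightarrow$ directly rather than by contradiction; both routes rest on the same partition/disjointness fact.
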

	
	\begin{proof}
		(1)\ First, we prove $\phi$ is injective. Let $a,b \in X^*$, and suppose $\phi(a) = \phi(b)$.  We can write $a$ and $b$ as $a = a_1a_2...a_k$ and $b = b_1...b_m$ for some $k,m$. From the system of equations	
		\[ \left\{
		\begin{array}{l}
		\displaystyle \sum_{i=1}^{k}\frac{a_i-1}{n_i} = \sum_{i=1}^{m}\frac{b_i-1}{n_i}\\
		\displaystyle \sum_{i=1}^{k}\frac{a_i-1}{n_i} + \frac{1}{n^k} = \sum_{i=1}^{m}\frac{b_i-1}{n_i} + \frac{1}{n^m}
		\end{array}
		\right. \]
	we get  $\frac{1}{n^k} = \frac{1}{n^m}$ and therefore $k = m$, so that $a$ and $b$ have the same length. Thus
		\[
		\begin{array}{lll} \displaystyle
		 \sum_{i=1}^{k}\frac{a_i-1}{n_i} = \sum_{i=1}^{k}\frac{b_i-1}{n_i}& \displaystyle \Rightarrow& \displaystyle  \sum_{i=1}^{k}(a_i-1)n^{k-i}= \sum_{i=1}^{k} (b_i-1)n^{k-i}\\
		& \Rightarrow & \displaystyle ((a_1-1)\ldots (a_k-1))_n = ((b_1-1)\ldots (b_k-1))_n,
		\end{array}     \]
		where $(x_1x_2\ldots x_k)_n$ is the numeric representation of number $x = x_1x_2...x_n$ (with $x_1 >0$, $x_i \in \{0,1,2,...,(n-1)\}$) in base $n$. Since both words give the same representation in base $n$ and this representation is unique except for zeroes in the beginning, we conclude that $a = b$, since $a$ and $b$ have the same length.
		
		Now, let $[\alpha,\beta[ \in \mathcal{P}$. By the definition of $\mathcal{P}$, we know that there exist $k,l \in \mathbb{N}$ such that $\alpha = kn^{-l}$. Let $k = (k_1k_2\ldots k_l)_n = \sum_{i=1}^{l}k_in^{l-i}$. Then:
	\begin{eqnarray*}
	  \phi((k_1+1)(k_2+1)\ldots(k_l+1)) = \left[\sum_{i=1}^{l}\frac{k_i}{n^i}, \sum_{i=1}^{l}\frac{k_i}{n^i} + \frac{1}{n^l}\right[  \\
	 = \left[\displaystyle \frac{\sum_{i=1}^{l}k_in^{l-i}}{n^l}, \frac{\sum_{i=1}^{l}k_in^{l-i}}{n^l} + \frac{1}{n^l}\right[ = [\alpha,\beta[.
	\end{eqnarray*}


(2)\ Let $a$ be a prefix of $b$. Then we can write $a = a_1a_2..a_m$, $b = a_1a_2\ldots a_mb_{m+1}\ldots b_{k}$. The result then follows from:
		
		\begin{eqnarray*}
		  \sum_{i=1}^{m}\frac{a_i-1}{n^i} &\leq& \sum_{i=1}^{m}\frac{a_i-1}{n^i} + \sum_{i=m+1}^{k}\frac{b_i-1}{n^i} \leq \sum_{i=1}^{m}\frac{a_i-1}{n^i} + \left(\sum_{i=m+1}^{k}\frac{b_i-1}{n^i} + \frac{1}{n^k}\right)\\
		  &\leq& \sum_{i=1}^{m}\frac{a_i-1}{n^i} + \frac{1}{n^m}
		\end{eqnarray*}
		where the last inequality follows from the fact that  for all $i$, $0 \leq (b_i-1) \leq n-1$ and also
		\begin{eqnarray*}
		  \sum_{i=m+1}^{k}\frac{b_i-1}{n^i} + \frac{1}{n^k} \leq \sum_{i=m+1}^{k} \frac{n-1}{n^{i}} + \frac{1}{n^k} &=&  \sum_{i=m+1}^{k} \left( \frac{1}{n^{i-1}} - \frac{1}{n^{i}}\right) + \frac{1}{n^k} \\
		   &=&  \left(\frac{1}{n^{m}} - \frac{1}{n^k} \right) + \frac{1}{n^k} = \frac{1}{n^m}.
		\end{eqnarray*}
		
		We now prove the converse. Let $a = a_1\ldots a_m$, $b = b_1\ldots b_k$, and suppose that $\phi(b) \subset \phi(a)$, but that $a$ is not a prefix of $b$. Since $\phi(b) \subset \phi(a)$, we have that $n^{-k} \leq n^{-m}$, and thus $m \leq k$.
		
		Notice that if we denote by $X_l$ the set of words of size $l$, then, for all $l$, $\phi(X_l)$ is a partition of $[0,1[$. This follows from the fact that the restriction $\phi:X_l\rightarrow\{[\frac{a}{n^l},\frac{a+1}{n^l}[:0\leq a\leq n^l\}$ is also a bijection.
		
		Because $m \leq k$, if $a$ is not a prefix of $b$, then there is a word $u \neq a$ of size $m$, such that $u$ is the prefix of $b$. But then, by the only if proof above, $\phi(b)\subset\phi(u)$, and therefore $\phi(u)\cap\phi(a) \neq \emptyset$. A contradiction! Thus $a$ must be a prefix of $b$.
	\end{proof}
	
	Given $g \in V_n$, we can write $g$ as $\{(a_1,b_1),(a_2,b_2),\ldots,(a_k,b_k)\}$, where $(a_i,b_i) \in \mathcal{P}^2$, $\bigcup_{i=1}^k a_i = \bigcup_{i=1}^k b_i = [0,1[$ and $a_i\cap a_j = b_i \cap b_j = \emptyset$ if $i \neq j$. Therefore the elements of $V_n$ can be written as maps between two partitions $A,B$ of $[0,1[$, such that $A,B \subset \mathcal{P}$.  Thus, given $a_i = [\alpha_i,\alpha_i+ n^{-k}[$, $b_i = [\beta_i,\beta_i+n^{-l}[$ and $x \in a_i$, the map
\begin{equation}
g(x) = \beta_i + n^{k-l}(x-\alpha_i)
\label{eqtablePL}
\end{equation}
is such that $g\in V_n$ as in Definition \ref{def:VTF}.
Given an element of the Higman-Thompson group $V_n$, we can write $\Psi(g)$ explicitly as
	\begin{eqnarray*}
	  \Psi(g) &=& \Psi(\{(a_1,b_1),(a_2,b_2),\ldots,(a_k,b_k)\}) \\
	   &=& s_{\phi^{-1}(a_1)}s^*_{\phi^{-1}(b_1)} + s_{\phi^{-1}(a_2)}s^*_{\phi^{-1}(b_2)} + \ldots + s_{\phi^{-1}(a_k)}s^*_{\phi^{-1}(b_k)}.
	\end{eqnarray*}
	 For this to be well defined we need to show that $\{\phi^{-1}(a_1), \ldots,\phi^{-1}(a_k) \}$ and $\{\phi^{-1}(b_1), \ldots,\phi^{-1}(b_k) \} $ are admissible sets. This is the aim of the next proposition.
	
	\begin{proposition}
		\label{prp:sets<->words}
		Let  $A = \{a_1,\ldots a_k\}$ be a partition of $[0,1[$, such that $a_i \in \mathcal{P}$ for all $i$. Then, for all $i\neq j$:
		\[ \bigcup_{m=1}^k \phi^{-1}(a_m)X^{\omega}  = X^{\omega} \hspace*{15pt} \mbox{and} \hspace*{15pt}  \phi^{-1}(a_i)X^\omega \cap\phi^{-1}(a_j)X^\omega = \emptyset.\]
		
	\end{proposition}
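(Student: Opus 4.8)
The plan is to exploit the two-part dictionary of Lemma \ref{lma:phi_bijective}: that $\phi$ is a bijection between words and $n$-adic intervals, and that containment $\phi(b) \subset \phi(a)$ is equivalent to $a$ being a prefix of $b$. I will also use, as in the proof of Theorem \ref{thm:EmbFinT}, the elementary dichotomy that for words $u,v$ one has $uX^\omega \cap vX^\omega \neq \emptyset$ precisely when one of $u,v$ is a prefix of the other. With these in hand both assertions reduce to purely combinatorial statements about prefixes.

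I would do \emph{disjointness} first, since it is immediate. Fix $i \neq j$; as $A$ is a partition, $a_i \cap a_j = \emptyset$. Suppose toward a contradiction that $\phi^{-1}(a_i)X^\omega \cap \phi^{-1}(a_j)X^\omega \neq \emptyset$. By the prefix dichotomy one of $\phi^{-1}(a_i), \phi^{-1}(a_j)$ is a prefix of the other; say $\phi^{-1}(a_i)$ is a prefix of $\phi^{-1}(a_j)$. Lemma \ref{lma:phi_bijective}(2) then forces $a_j \subset a_i$, whence $a_i \cap a_j = a_j \neq \emptyset$, a contradiction. This proves the second identity.

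For the \emph{covering} identity the reverse inclusion is trivial, so only $X^\omega \subset \bigcup_m \phi^{-1}(a_m)X^\omega$ needs work. Let $\ell_m$ be the length of $\phi^{-1}(a_m)$, so that $|a_m| = n^{-\ell_m}$, and set $L = \max_m \ell_m$. Recall from the proof of Lemma \ref{lma:phi_bijective} that $\phi(X_L)$ is a partition of $[0,1[$ into intervals of length $n^{-L}$. For each $w \in X_L$, the intervals $\phi(w)$ and $a_m$ both lie in $\mathcal{P}$, hence are nested or disjoint; since $|\phi(w)| = n^{-L} \leq n^{-\ell_m} = |a_m|$, the only possible nesting is $\phi(w) \subseteq a_m$. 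Because the $a_m$ cover $[0,1[$ and $\phi(w)$ has positive length, $\phi(w)$ meets some $a_m$ and therefore $\phi(w) \subseteq a_m$, with $m$ unique by the disjointness just proved. By Lemma \ref{lma:phi_bijective}(2), $\phi(w) \subseteq a_m$ says $\phi^{-1}(a_m)$ is a prefix of $w$. Now given any $x \in X^\omega$, apply this to its length-$L$ prefix $w = x_1 \cdots x_L$: the associated $m$ yields that $\phi^{-1}(a_m)$ is a prefix of $w$, hence of $x$, so $x \in \phi^{-1}(a_m)X^\omega$. This gives the covering identity.

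The main obstacle I anticipate is the single point in the covering step where one must guarantee that each level-$L$ cylinder $\phi(w)$ sits inside one $a_m$ rather than straddling a boundary between two of them. This is exactly where the hypothesis $a_m \in \mathcal{P}$ is used: members of $\mathcal{P}$ are $n$-adic intervals and so cannot partially overlap, which together with the length comparison $n^{-L} \leq n^{-\ell_m}$ rules out straddling. I deliberately avoid the alternative argument through the limit point $\sum_i (x_i-1)n^{-i}$ of the nested cylinders, since that runs into the usual base-$n$ ambiguity for eventually-constant tails; working at the fixed finite level $L$ and invoking the nested-or-disjoint property of $\mathcal{P}$ sidesteps this entirely.
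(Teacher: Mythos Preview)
Your proof is correct and rests on the same bridge as the paper's, namely Lemma \ref{lma:phi_bijective}(2), translating between interval containment and the prefix relation; the overall architecture is the same. For disjointness the two arguments are nearly identical: you invoke the prefix dichotomy to get one of $\phi^{-1}(a_i),\phi^{-1}(a_j)$ as a prefix of the other and conclude $a_j\subset a_i$, while the paper passes through a common $x\in X^\omega$ and a long enough finite prefix of it lying in both $a_i$ and $a_j$. The covering step is where the executions diverge a little. The paper argues pointwise: given $x\in X^\omega$ it looks at the left endpoint $w=\sum_{i\le l}(x_i-1)n^{-i}$ of $\phi(x_1\cdots x_l)$, finds the $a_j=[a,b[$ containing $w$, and then enlarges $l$ until $n^{-l}<b-w$. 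You instead fix $L=\max_m \ell_m$ once and use the nested-or-disjoint property of $n$-adic intervals to place each level-$L$ cylinder inside a single $a_m$. This buys you a cleaner argument: in the paper's phrasing, $w$ (and hence the $a_j$ containing it) depends on $l$, so the sentence ``choose $l$ such that $n^{-l}<b-w$'' tacitly requires a small fixed-point justification, whereas your fixed-level argument sidesteps that entirely and is exactly the robustness you flagged in your final paragraph.
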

	
	\begin{proof}
		Let $x \in X^\omega$. Suppose there is no $i$ such that $\phi^{-1}(a_i)$ is a prefix of $x$. For any $l\in \mathbb{N}$, we can write $x$ as $x_1x_2\ldots x_lu$, where $u \in X^\omega$. Let $w = \sum_{i=1}^{l}\frac{x_i-1}{n^i}$. By definition, $\phi(x_1x_2\ldots x_l) = [w,w+n^{-l}[ $. Since $A$ is a partition of $[0,1[$, we know there is an $a_j \in A$ such that $a_j = [a,b[$, and $w \in a_j$. Since $l$ is arbitrary, we can choose $l$ such that $n^{-l}<b-w$ which implies $\phi(x_1x_2\ldots x_l) \subset a_j$. By Lemma \ref{lma:phi_bijective} (2), this implies that $\phi^{-1}(a_j)$ is a prefix of $x_1x_2\ldots x_l$, and thus of $x$. A contradiction. It is proved that
\[ \bigcup_{m=1}^k \phi^{-1}(a_m)X^{\omega}  = X^{\omega}.\]
		
		Now, suppose there are $i \neq j$ such that $\phi^{-1}(a_i)X^\omega \cap\phi^{-1}(a_j)X^\omega = \{x\}$ for some $x \in X^\omega$. Then, both $\phi^{-1}(a_j)$ and $\phi^{-1}(a_i)$ will be prefixes of $x$. By Lemma \ref{lma:phi_bijective} (2), this implies that there exists an $l$ big enough such that $\phi(x_1x_2\ldots x_l) \subset a_i$ and $\phi(x_1x_2\ldots x_l) \subset a_j$, a contradiction, since $A$ is a partition. Thus, $\phi^{-1}(a_i)X^\omega \cap\phi^{-1}(a_j)X^\omega = \emptyset$.
\end{proof}

\section{Representations of $V_n$ from the Cuntz algebra $\mathcal{O}_n$}
	\label{Representations of $V_n$ in $H$}


	Using $\Psi$ as in \eqref{eq:Psi}, we can now associate a representation of the Higman-Thompson group $V_n$ from every representation of the Cuntz algebra $\mathcal{O}_n$, acting on the same Hilbert space $H$.
	\begin{definition}
		\label{def:rho(g)}
		Given a Hilbert space $H$ and a representation $\pi:\mathcal{O}_n \rightarrow B(H)$, we define the map $\rho_{\pi}:V_n \rightarrow B(H)$ as
		$$\rho_{\pi}(g) = (\pi \circ \Psi)(g).$$ 	
	\end{definition}
	We then have the following theorem.
	\begin{theorem}
	\label{thm:rhorep}
		$\rho_{\pi}$ is a unitary representation of $V_n$ in $H$.
	\end{theorem}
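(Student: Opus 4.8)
The plan is to show that $\rho_\pi = \pi \circ \Psi$ inherits the homomorphism and unitarity properties directly from the two maps being composed. Since $\Psi : V_n \to \mathcal{O}_n$ is stated in the excerpt (citing \cite{Nek}) to be a faithful \emph{unitary representation} of $V_n$ in $\mathcal{O}_n$, and $\pi : \mathcal{O}_n \to B(H)$ is a $*$-homomorphism, the composition should automatically be a group homomorphism into the unitary group of $B(H)$. First I would verify the homomorphism property: for $g, h \in V_n$ we have $\rho_\pi(gh) = \pi(\Psi(gh)) = \pi(\Psi(g)\Psi(h)) = \pi(\Psi(g))\pi(\Psi(h)) = \rho_\pi(g)\rho_\pi(h)$, using that $\Psi$ is a homomorphism (so $\Psi(gh) = \Psi(g)\Psi(h)$) and that $\pi$ is multiplicative. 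This also gives $\rho_\pi(e) = \pi(\Psi(e)) = \pi(1) = 1$, so $\rho_\pi$ lands in $B(H)$ with the identity preserved.

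Next I would check unitarity, i.e.\ that $\rho_\pi(g^{-1}) = \rho_\pi(g)^*$ for every $g \in V_n$, which by Definition is the defining property of a unitary group representation as set up in the Preliminaries. The key input is that each $\Psi(g)$ is a unitary element of $\mathcal{O}_n$, so $\Psi(g)^* = \Psi(g)^{-1} = \Psi(g^{-1})$ (the last equality because $\Psi$ is a homomorphism). Applying the $*$-homomorphism $\pi$, which satisfies $\pi(a^*) = \pi(a)^*$, yields
\begin{equation*}
\rho_\pi(g)^* = \pi(\Psi(g))^* = \pi(\Psi(g)^*) = \pi(\Psi(g^{-1})) = \rho_\pi(g^{-1}),
\end{equation*}
which is exactly the unitarity condition. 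Combined with the homomorphism property, this shows each $\rho_\pi(g)$ is invertible with inverse equal to its adjoint, hence a unitary operator on $H$.

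Concretely, one can make the unitarity of $\Psi(g)$ explicit from formula \eqref{eq:Psi}: writing $\Psi(g) = \sum_{i} s_{a_i} s_{b_i}^*$ where $\{a_i\}$ and $\{b_i\}$ are admissible languages, the Cuntz relations in \eqref{def:CuntzAlg} give $s_{b_i}^* s_{b_j} = \delta_{ij} 1$ (orthonormality of the range projections, using that no $b_i$ is a prefix of $b_j$) and $\sum_i s_{a_i} s_{a_i}^* = 1$ (since the $a_i$ form an admissible language and hence a complete set of orthogonal range projections), from which $\Psi(g)\Psi(g)^* = \Psi(g)^*\Psi(g) = 1$ follows by a short computation. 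Since the faithful unitary representation property of $\Psi$ is already quoted from \cite{Nek}, I would simply invoke it rather than reprove it.

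The proof presents no genuine obstacle: it is a formal consequence of composing a unitary representation with a $*$-homomorphism. The only point requiring a moment of care is making sure the two notions of ``unitary'' align—namely that a unitary \emph{element} $u \in \mathcal{O}_n$ (satisfying $u^*u = uu^* = 1$) maps under $\pi$ to a unitary \emph{operator} in $B(H)$, which holds because $\pi(u)\pi(u)^* = \pi(uu^*) = \pi(1) = 1$ and similarly on the other side. I expect the author's proof to be essentially this two-line argument, possibly with the explicit verification that $\Psi(g)$ is unitary included for self-containedness.
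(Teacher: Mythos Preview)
Your proposal is correct and essentially matches the paper's proof: both obtain the homomorphism property from the composition of homomorphisms, and both verify $\rho_\pi(g^{-1}) = \rho_\pi(g)^*$. The only cosmetic difference is that the paper writes out the table for $g^{-1}$ explicitly (swapping the rows) to see directly that $\Psi(g^{-1}) = \Psi(g)^*$, whereas you cite the unitarity of $\Psi(g)$ from \cite{Nek} and then sketch the same computation as an optional concrete check.
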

	
	\begin{proof}
		Since both $\pi$ and $\Psi$ are homomorphisms, we conclude that $\rho_{\pi}$ is a representation of $V_n$ in $H$. In order to prove that $\rho_{\pi}$ is unitary, we must show that
		$$ \rho_{\pi}(g^{-1}) = (\rho_{\pi}(g))^*.$$
		Using the fact that given a map $g = \{(a_1,b_1),(a_2,b_2),\ldots,(a_k,b_k)\} $ we can write $g^{-1}$ as $$g^{-1} = \{(b_1,a_1),(b_2,a_2),\ldots,(b_k,a_k)\}$$ we automatically have
		\begin{eqnarray*}
		  \rho_{\pi}(g^{-1}) &=& s_{\phi^{-1}(b_1)}s^*_{\phi^{-1}(a_1)} + \ldots + s_{\phi^{-1}(b_k)}s^*_{\phi^{-1}(a_k)}\\ &=& (s_{\phi^{-1}(a_1)}s^*_{\phi^{-1}(b_1)} + \ldots + s_{\phi^{-1}(a_k)}s^*_{\phi^{-1}(b_k)})^*
		   = (\rho_{\pi}(g))^*.
		\end{eqnarray*}
	\end{proof}

\subsection{Representations $\rho_x$ of $V_n$ on the orbits of the interval map $f(x)=nx$ (mod 1)}
\label{sect:rhoxorb}
	
	In this subsection, we will study the representations of $V_n$ in a specific set of Hilbert spaces. Fix $x \in [0,1[$ and define its (generalized) orbit as
\begin{equation}
\text{orb}(x) := \{f^z(x): z \in \mathbb{Z}\}
\label{eq:orb}
\end{equation}
where $f(y) = ny \mod 1$ with $y\in [0,1[$. Sometimes we will use the notation $\orb(x)$ insatead of $\text{orb}(x)$. Note that orb$(x)$ is the forward and backward orbit of $x$, thus $f(x)=nx$ (mod 1), $f^{-1}(x) = \{ \frac{x}{n}, \frac{x+1}{n}, \ldots, \frac{x+ (n-1)}{n}\}$, $$f^{-2}(x) = \bigcup_{y \in f^{-1}(x)}f^{-1}(y)$$ and so on.

Let $\sim$ be a binary relation in $[0,1[$ such that $y \sim x$ if and only if $y \in \text{orb}(x)$. This is equivalent to existing $p,k \in \mathbb{N}$ such that $f^p(y) = f^k(x)$. One can easily prove that $\sim$ is an equivalence relation whose equivalence classes are the different orbits. Denote by $H_x = l^2(\text{orb}(x))$. The set $\{\delta_y: y \in \text{orb}(x)\}$, where $\delta_y:\text{orb}(x)\rightarrow\mathbb{R}$ is the map
   \[  \delta_y(z) = \left\{
   \begin{array}{ll}
   1 & y = z\\
   0 & y \neq z
   \end{array}
   \right.  \]
     is an orthonormal basis of $H_x$. Since $\text{orb}(x)$ is a countable union of countable sets, it is a countable set. Therefore, $H_x$ is a separable Hilbert space.
	\begin{definition}
		\label{ref:Si}
		Let $i\in\{1,....,n\}$. Define $S_i \in B(H_x)$ first on the vector basis
		\[ S_i\delta_y = \delta_{\frac{y+(i-1)}{n}}\]
then extend it to the linear span and finally extended to $H_x$ by density of the basis and continuity of $S_i$ on the algebraic span of $\{\delta_y: y\in \text{orb(x)}\}$.
	\end{definition}
For every $i$, we easily check that the adjoint of $S_i$ is as follows
	\[ S^*_i \delta_y = \left\{
	\begin{array}{ll}
	\delta_{yn-(i-1)} & y \in [\frac{i-1}{n},\frac{i}{n}[\\
	0 & y \notin 	[\frac{i-1}{n},\frac{i}{n}[
	\end{array}
	\right.  \]
	thus $S_i$ is an isometry $S_i^\ast S_i=1$. One can verify that the operators $S_i$ ($i=1,...,n$) satisfy the Cuntz relations. We therefore have the following result.
	\begin{theorem}
	\label{thm:pi}
	The map $\pi_x:\mathcal{O}_n \rightarrow B(H_x)$ such that $\pi_x(s_i) = S_i$ is a representation of $\mathcal{O}_n$ in $H_x$.
	\end{theorem}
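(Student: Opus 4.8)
The plan is to invoke the universal property of $\mathcal{O}_n$: since $\mathcal{O}_n$ is the universal C*-algebra generated by $n$ isometries subject to the relations \eqref{def:CuntzAlg}, it suffices to check that the operators $S_1,\dots,S_n$ of Definition \ref{ref:Si} are well-defined bounded operators on $H_x$ satisfying exactly those relations; the desired $*$-homomorphism $\pi_x$ then exists and is unique. First I would confirm that each $S_i$ genuinely maps $H_x$ into itself. The formula $S_i\delta_y=\delta_{(y+(i-1))/n}$ makes sense only if $(y+(i-1))/n\in\text{orb}(x)$ whenever $y\in\text{orb}(x)$; this holds because $f\bigl((y+(i-1))/n\bigr)=(y+(i-1))\bmod 1=y$, so $(y+(i-1))/n$ is one of the backward images of $y$ and hence lies in $\text{orb}(x)$. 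Since $y\mapsto(y+(i-1))/n$ is injective, $S_i$ sends the orthonormal basis $\{\delta_y\}$ injectively into itself, so it extends to an isometry of $H_x$, as the excerpt indicates.

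Next I would verify the two algebraic relations. For $s_i^*s_j=\delta_{ij}1$, I would compute $\langle S_i\delta_y,S_j\delta_{y'}\rangle$ directly on basis vectors: the image $S_i\delta_y$ is supported at $(y+(i-1))/n\in[\frac{i-1}{n},\frac{i}{n}[$, while $S_j\delta_{y'}$ is supported in $[\frac{j-1}{n},\frac{j}{n}[$. For $i\ne j$ these half-open intervals are disjoint, so the inner product vanishes and $S_i^*S_j=0$; for $i=j$ the injectivity of $y\mapsto(y+(i-1))/n$ reduces the inner product to $\langle\delta_y,\delta_{y'}\rangle$, giving $S_i^*S_i=1$. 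For the summation relation $\sum_{i=1}^n s_is_i^*=1$, I would use the adjoint formula recorded just before the theorem to identify $S_iS_i^*$ as the orthogonal projection onto $\overline{\operatorname{span}}\{\delta_z:z\in\text{orb}(x)\cap[\frac{i-1}{n},\frac{i}{n}[\}$. Because the $n$ intervals $[\frac{i-1}{n},\frac{i}{n}[$, $i=1,\dots,n$, partition $[0,1[\supseteq\text{orb}(x)$, each basis vector $\delta_z$ lies in the range of exactly one $S_iS_i^*$; hence these projections are mutually orthogonal and sum to the identity on $H_x$.

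With every relation in \eqref{def:CuntzAlg} verified, the universal property delivers a unique $*$-homomorphism $\pi_x:\mathcal{O}_n\to B(H_x)$ with $\pi_x(s_i)=S_i$, which is precisely the asserted representation. I do not expect a serious obstacle: the only genuinely substantive point is the invariance of $\text{orb}(x)$ under the backward branches $y\mapsto(y+(i-1))/n$, which is what guarantees that each $S_i$ is an operator on $H_x$ rather than a merely formal expression; the remainder is routine bookkeeping of orthogonality together with the partition of $[0,1[$ into the $n$ base intervals. One may further observe that simplicity of $\mathcal{O}_n$ forces $\pi_x$ to be injective, although injectivity is not needed for the statement.
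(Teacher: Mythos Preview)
Your proposal is correct and follows exactly the route the paper indicates: the paper does not give a formal proof but simply records the adjoint of $S_i$, notes $S_i^*S_i=1$, and states that ``one can verify that the operators $S_i$ satisfy the Cuntz relations,'' whence the theorem; you have supplied precisely those verifications (orbit invariance under the backward branches, the orthogonality $S_i^*S_j=\delta_{ij}$ via the disjoint intervals $[\tfrac{i-1}{n},\tfrac{i}{n}[$, and the range-projection sum via the partition of $[0,1[$), together with the appeal to the universal property.
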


\begin{notation}
If confusion arises, we will denote:
\begin{enumerate}
  \item the representations $\pi_x$ introduced in Theorem \ref{thm:pi} by $\pixn$;
  \item the representation $\rho_{\pi_x}$ of $V_n$ by $\rho_x$ or $\rhoxn$ (see Definition \ref{def:rho(g)});
      \item the Hilbert space $H_x=\ell^2(\hbox{orb}(x))$ by $\hxn$.
\end{enumerate}
\label{not1}
\end{notation}

	 The next result is the main result of this section: its shows that the action of $V_n$ on the set $\{\delta_y: y\in\hbox{orb}(x)\}$ is well defined and the underlying representation of $V_n$ on $H_x$ coincides with  $\rho_x$.

	\begin{theorem}
	\label{thm:bigs4}
	Given $g \in V_n$ and $y \in \text{orb}(x)$, we have
	\begin{enumerate}
	    \item $g(y) \in \text{orb}(x)$,
	    \item $\rho_x(g)(\delta_y) = \delta_{g(y)}$.
	\end{enumerate}
	
	\end{theorem}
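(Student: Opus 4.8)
The plan is to reduce everything to the explicit combinatorics of how the generators $S_i$ and their adjoints move the basis vectors $\delta_y$, and then to match the resulting point with the piecewise-linear formula \eqref{eqtablePL}. First I would record, by induction on the length of a word $w = w_1\cdots w_m \in X^*$ and directly from Definition \ref{ref:Si}, the closed forms for the ``long'' operators $S_w := S_{w_1}\cdots S_{w_m}$ and $S_w^\ast$ on basis vectors. Writing $\alpha_w := \sum_{i=1}^m \frac{w_i-1}{n^i}$ for the left endpoint of the interval $\phi(w) = [\alpha_w, \alpha_w + n^{-m}[$, the formulas are
\[ S_w\,\delta_y = \delta_{\alpha_w + n^{-m}y}, \qquad S_w^\ast\,\delta_y = \begin{cases} \delta_{\,n^{m}(y-\alpha_w)} & y \in \phi(w),\\ 0 & y \notin \phi(w). \end{cases} \]
Two features of these formulas do most of the work: each $S_w$ and each nonzero $S_w^\ast$ sends a basis vector $\delta_y$ to a single basis vector $\delta_z$ (with coefficient $1$), and the new index $z$ again lies in $\text{orb}(x)$, since $z$ is obtained from $y$ by a branch of $f^{-1}$ (for $S_w$) or by $f$ (for $S_w^\ast$), and $\text{orb}(x)$ is by definition closed under both $f$ and $f^{-1}$; note also $\text{orb}(x) \subseteq [0,1[$.

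Next I would fix a table $g = \{(a_1,b_1),\dots,(a_k,b_k)\}$ for $g$ and expand, using the explicit form of $\Psi$ from Subsection \ref{HTtables},
\[ \rho_x(g)\,\delta_y = \pi_x(\Psi(g))\,\delta_y = \sum_{i=1}^k S_{\phi^{-1}(a_i)}\,S_{\phi^{-1}(b_i)}^\ast\,\delta_y. \]
Writing $u_i = \phi^{-1}(a_i)$ and $v_i = \phi^{-1}(b_i)$, Proposition \ref{prp:sets<->words} guarantees that $\{v_i\}$ is admissible, so the intervals $\phi(v_i) = b_i$ are pairwise disjoint and cover $[0,1[$. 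Since $y \in \text{orb}(x) \subseteq [0,1[$, there is exactly one index $i_0$ with $y \in b_{i_0}$; by the adjoint formula above, $S_{v_i}^\ast \delta_y = 0$ for every $i \neq i_0$, so the whole sum collapses to the single term $\rho_x(g)\,\delta_y = S_{u_{i_0}}\,S_{v_{i_0}}^\ast\,\delta_y$.

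Finally I would evaluate this one term with the closed forms: if $a_{i_0} = [\alpha_{i_0}, \alpha_{i_0}+n^{-k_0}[$ and $b_{i_0} = [\beta_{i_0}, \beta_{i_0}+n^{-l_0}[$ (so $k_0 = |u_{i_0}|$ and $l_0 = |v_{i_0}|$), then $S_{v_{i_0}}^\ast \delta_y = \delta_{\,n^{l_0}(y-\beta_{i_0})}$, and applying $S_{u_{i_0}}$ yields $\rho_x(g)\,\delta_y = \delta_z$ with $z$ an explicit affine function of $y$ on the piece $b_{i_0}$. Comparing $z$ with the piecewise-linear description \eqref{eqtablePL} of $g$ on the relevant piece identifies $z$ as $g(y)$, which proves (2). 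Part (1) then comes for free: by the first paragraph $z \in \text{orb}(x)$, and since $z = g(y)$ we get $g(y) \in \text{orb}(x)$; in particular $\delta_{g(y)}$ is a genuine basis vector of $H_x$, so the statement of (2) is meaningful.

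I expect the main obstacle to be the bookkeeping in this last step: translating faithfully between the word picture (where $S_w, S_w^\ast$ act by the prefix rules above) and the interval picture (where $g$ is the affine map of \eqref{eqtablePL}), keeping careful track of the exponents $k_0, l_0$ and of the orientation of the affine maps, so that the computed point $z$ matches $g(y)$ and not another branch. One should also note that the outcome is independent of the table chosen for $g$, which is automatic since both $\Psi(g)$ (by faithfulness of $\Psi$) and the piecewise-linear map $g$ are intrinsic to $g \in V_n$.
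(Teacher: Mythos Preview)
Your proof of part (2) is correct and follows essentially the same route as the paper: the paper packages your closed-form formulas for $S_w$ and $S_w^\ast$ as Lemmas \ref{lma:SuDy}, \ref{lma:S*vDy} and \ref{S*Dy=0} (with the auxiliary Lemma \ref{lma:b<->y}), and then assembles them exactly as you do in Lemma \ref{lma:rho(g)y=g(y)}.

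Where you genuinely diverge is in part (1). The paper proves $g(y)\in\text{orb}(x)$ \emph{first and independently}, via three number-theoretic lemmas (Lemmas \ref{lma:kninorb}, \ref{lma:kn+ainorb}, \ref{lma:g(y)inorb}) that show directly that any point of the form $an^b + yn^c \bmod 1$ with $y\in\text{orb}(x)$ stays in $\text{orb}(x)$. You instead extract (1) as a byproduct of the operator computation: since each $S_i$ implements a branch of $f^{-1}$ and each nonzero $S_i^\ast$ implements $f$, every intermediate basis index produced along the word $S_{u_{i_0}}S_{v_{i_0}}^\ast$ remains in $\text{orb}(x)$, hence so does the final one, which you have already identified with $g(y)$. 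Your route is shorter and more conceptual---it avoids the arithmetic of Lemmas \ref{lma:kninorb}--\ref{lma:g(y)inorb} entirely---while the paper's route has the minor advantage of establishing (1) without reference to the representation, so that the well-definedness of $\delta_{g(y)}$ is settled before one ever computes $\rho_x(g)\delta_y$. Both are valid; your ordering just reverses the logical dependence between (1) and (2).
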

	
	This theorem is going to be proved in a series of lemmas. The first claim is Lemma \ref{lma:g(y)inorb}. The second claim is Lemma \ref{lma:rho(g)y=g(y)}.
	
Our first goal is to prove Lemma \ref{lma:g(y)inorb}. The idea is that for any $y$, $g(y)$ will be of the form $an^k +yn^m$. Therefore, we need to show that multiplying an element of $\text{orb}(x)$ by powers of $n$ preserves the orbit, and that for each $y \in \text{orb}(x)$, there is an element of the form $an^k +yn^m$ in $\text{orb}(x)$. We start by proving that powers of $n$ preserve the orbit.
	
	\begin{lemma}
		\label{lma:kninorb}
		Let $k \in \text{orb}(x)$. Then, given $m \in \mathbb{Z}$, $kn^m \mod 1 \in \text{orb}(x)$
	\end{lemma}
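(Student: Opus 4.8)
The plan is to reduce everything to reachability within a single orbit. Since $k\in\text{orb}(x)$ and $\sim$ is an equivalence relation, we have $\text{orb}(k)=\text{orb}(x)$; hence it suffices to show that $kn^m \bmod 1$ is obtained from $k$ by finitely many applications of $f$ or $f^{-1}$, so that it lies in $\text{orb}(k)=\text{orb}(x)$. I would split the argument according to the sign of $m$, treating $m\geq 0$ via the forward iteration formula for $f$ and $m<0$ via the backward one; the subcase $m=0$ is trivial since $k\bmod 1=k$.

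For $m\geq 0$ the key is the identity $f^m(y)=n^m y \bmod 1$, valid for every $y\in[0,1[$. I would prove this by induction on $m$: assuming $f^j(y)=n^j y \bmod 1$, one computes
\[ f^{j+1}(y)=n\bigl(n^j y \bmod 1\bigr)\bmod 1 = n^{j+1}y \bmod 1, \]
where the last equality holds because $n\lfloor n^j y\rfloor$ is an integer and is therefore absorbed by the outer reduction mod $1$. Applying this with $y=k$ gives $kn^m \bmod 1 = f^m(k)$, which lies in $\text{orb}(k)=\text{orb}(x)$.

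For $m<0$, write $m=-l$ with $l\geq 1$. Since $k\in[0,1[$ we have $k/n^l\in[0,n^{-l}[\subset[0,1[$, so the reduction mod $1$ is vacuous and $kn^m \bmod 1 = k/n^l$. It remains to recognize this point as a backward iterate of $k$. From the description $f^{-1}(y)=\{\tfrac{y}{n},\tfrac{y+1}{n},\ldots,\tfrac{y+(n-1)}{n}\}$, the value $\tfrac{y}{n}$ always belongs to $f^{-1}(y)$; choosing this lowest branch at each step and iterating $l$ times yields $k/n^l\in f^{-l}(k)\subseteq\text{orb}(k)=\text{orb}(x)$, as required.

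No serious obstacle is expected here: the statement is essentially bookkeeping about the dynamics of $f$. The only points demanding a little care are the inductive verification of $f^m(y)=n^m y\bmod 1$ (ensuring the inner reduction does not interfere with the outer one) and, for negative exponents, the observation that $k/n^l$ already lies in $[0,1[$ so that no reduction is needed before identifying it as a preimage. Both are elementary consequences of the explicit formulas already recorded for $f$ and $f^{-1}$.
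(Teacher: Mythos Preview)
Your proof is correct and follows essentially the same approach as the paper: induction on $m$ for the forward case, using that the integer $n\lfloor n^j y\rfloor$ is absorbed by the outer reduction mod $1$, and a direct preimage argument for negative $m$. The paper simply declares the case $m\leq 0$ ``obvious'' where you spell out the lowest-branch preimage, and it runs the induction directly on membership in $\text{orb}(x)$ rather than first isolating the identity $f^m(y)=n^m y\bmod 1$, but the content is the same.
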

	
	\begin{proof}
		The result is obvious for $m \leq 0$. For $m>0$, we prove the result by induction on $m$.
		We know that, for any given $k \in \text{orb}(x)$, we have that $(nk) \mod 1 \in \text{orb}(x)$. The result is thus true for $m = 1$.
		
		Suppose the result is true for $m$.  Notice that $n^mk = (n^mk \mod 1) + \lfloor n^mk \rfloor$. Then, since
	$ n^{m+1}k \mod 1 = n(n^mk) \mod 1$
\[ = n((n^mk \mod 1) + \lfloor n^mk \rfloor) \mod 1 = n(n^mk \mod 1) \hbox{mod}\ 1 \]
		and by assumption, $(n^mk \mod 1) \in \text{orb}(x)$, we conclude that $n^{m+1}k \mod 1 \in \text{orb}(x)$.
	\end{proof}
	
	We now show that for any $a \in \mathbb{N}$, and $y \in \text{orb}(x)$, there is an element of the form $an^k +yn^m$ in $\text{orb}(x)$.
	
	\begin{lemma}
		\label{lma:kn+ainorb}
		Let $k \in \text{orb}(x)$. Then for any $a \in \mathbb{N}$ such that the number of digits in base $n$ of $a$ is $m$, we have that $kn^{-m} + an^{-m} \in \text{orb}(x)$.
	\end{lemma}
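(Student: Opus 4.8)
The plan is to exhibit the point $w := kn^{-m} + an^{-m}$ directly as a backward iterate of $k$ under $f$, and then to transfer membership from $\text{orb}(k)$ to $\text{orb}(x)$ using that these are the same equivalence class of $\sim$. First I would record the elementary bound that keeps $w$ inside the domain $[0,1[$ on which $f$ acts: writing $w = (k+a)n^{-m}$, the hypothesis that $a$ has exactly $m$ digits in base $n$ gives $a \leq n^m - 1$, while $k \in [0,1[$ by assumption; hence $k + a < n^m$ and so $0 \leq w < 1$. This is the only place where the digit hypothesis is actually used, and it is what makes $w$ a legitimate point of the interval.

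The key step is to check that $w$ returns to $k$ after $m$ iterations. Exactly as in the computation carried out in the proof of Lemma \ref{lma:kninorb}, one has $f^m(w) = n^m w \bmod 1$, and therefore
\[ f^m(w) = n^m (k+a)n^{-m} \bmod 1 = (k+a) \bmod 1 = k, \]
where the last equality holds because $a \in \mathbb{N}$ and $k \in [0,1[$. Thus $w$ is a preimage of $k$ under $f^m$, i.e.\ $w \in f^{-m}(k)$.

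It then remains to conclude $w \in \text{orb}(x)$. Since $k \in \text{orb}(x)$, the relation $\sim$ supplies $p,q \in \mathbb{N}$ with $f^p(k) = f^q(x)$; combining this with $f^m(w) = k$ yields $f^{m+p}(w) = f^p(k) = f^q(x)$, so that $w \sim x$ and hence $w \in \text{orb}(x)$, as required. (Equivalently, $w \in f^{-m}(k) \subseteq \text{orb}(k) = \text{orb}(x)$, the orbits coinciding because $k$ and $x$ lie in the same equivalence class.) I do not expect a genuine obstacle here: the proof is routine once the bound $a < n^m$ is in hand, since that bound is precisely what guarantees $w \in [0,1[$ and hence the clean identity $f^m(w) = n^m w \bmod 1 = k$.
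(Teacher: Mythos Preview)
Your proof is correct and takes a genuinely different route from the paper's. The paper argues by induction on $m$: it writes $a = (a_1 a_2 \ldots a_m)_n$ in base $n$, strips off the leading digit $a_1$, applies the induction hypothesis to the shorter tail, and then uses Lemma~\ref{lma:kninorb} together with the one-step preimage $(y+a_1)/n$ to reinsert $a_1$. Your argument bypasses this digit-by-digit recursion entirely: you observe that the digit hypothesis is needed only for the crude bound $a \le n^m - 1$, which combined with $k<1$ forces $w = (k+a)n^{-m} \in [0,1[$, and then a single computation $f^m(w) = (k+a) \bmod 1 = k$ places $w$ in $f^{-m}(k) \subseteq \text{orb}(x)$. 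Your approach is shorter and isolates exactly what the ``$m$ digits'' hypothesis contributes; the paper's approach, while more laborious, stays closer to the elementary generating operations $y \mapsto (y+i)/n$ that define backward iterates and so dovetails with the proof style of the surrounding lemmas.
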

	
	\begin{proof}
		Let $(a_1a_n\ldots a_m)_n$ be the representation of $a$ in base $n$. We claim that $a_1 > 0$ and $a_i \in \{0,1,2,\ldots,(n-1)\}$.
		
		We prove the result by induction on $m$. If $m = 1$, we have $a \in \{1,2,\ldots,(n-1)\}$. Since, given $y \in \text{orb}(x)$, and $h \in \{0,1,2,\ldots,(n-1)\}$, $(y+h)/n \in \text{orb}(x)$ the result follows for $m=1$.
		
		Now, let $j$ be the first number such that $a_j > 0$ and $j>1$. We have $a = a_1n^{m-1} + (a_ja_{j+1}\ldots a_m)_n$. By the induction hypothesis, we have $kn^{j-m} + (a-a_1n^{m-1})n^{j-m} \in \text{orb}(x)$. Multiplying by $n^{-(j-1)}$ and using Lemma \ref{lma:kninorb} we obtain $n^{-(j-1)}(kn^{j-m} + (a-a_1n^{m-1})n^{j-m}) \in \text{orb}(x)$ and thus $kn^{-(m-1)} + (a-a_1n^{m-1})n^{-(m-1)} \in \text{orb}(x)$. Then	
		\[ \frac{(kn^{-(m-1)} + (a-a_1n^{m-1})n^{-(m-1)}) + a_1}{n} \in \text{orb}(x) \Rightarrow kn^{-m} + an^{-m} - a_1n^{-1} + a_1n^{-1} \]
which is in \text{orb}(x)
		and therefore $kn^{-m} + an^{-m}$.
	\end{proof}
	
	We can now prove Lemma \ref{lma:g(y)inorb}. Observe that, given $y \in [0,1[$, we have $g(y) = \alpha n^b + yn^c$, with $\alpha\in \mathbb{Z}$. Also, since $g(k) \in [0,1[$, we have $g(y) = \alpha n^b + yn^c \mod 1$. As $\alpha n^b + yn^c \mod 1 = (\alpha n^b \mod 1) + yn^c \mod 1$, we can write $g(y) = an^b + yn^c \mod 1$, where $an^b = (\alpha n^b \mod 1) \in [0,1[$.
	
	\begin{lemma}
		\label{lma:g(y)inorb}
	Let $g \in V_n$, $f(x)= nx \mod 1$ and $\text{orb}(x) = \bigcup_{m \in \mathbb{Z}}\{f^m(x)\}$.  Then, for all $y \in \text{orb}(x)$, $g(y) \in \text{orb}(x)$.
	\end{lemma}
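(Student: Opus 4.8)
The plan is to reduce $g(y)$ to a composition of two operations that each manifestly preserve the orbit, and then invoke the two preceding lemmas. First I would use that $y\in\text{orb}(x)\subseteq[0,1[$ lies in a unique half-open piece of the domain partition of $g$, so that by the piecewise-linear formula \eqref{eqtablePL}---equivalently, by the observation recorded just before the statement---one may write $g(y)=(an^{b}+yn^{c})\bmod 1$ with $a\in\mathbb{N}$, $b,c\in\mathbb{Z}$ and $an^{b}\in[0,1[$. The point of this normal form is that it exhibits $g(y)$ as ``multiply $y$ by a power of $n$, then add an $n$-adic rational, then reduce mod $1$'', and each of these steps can be controlled separately.

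Next I would peel off the multiplicative part. Setting $y':=yn^{c}\bmod 1$, Lemma \ref{lma:kninorb} gives $y'\in\text{orb}(x)$, and since $an^{b}\in[0,1[$ the additivity of reduction $\bmod\,1$ yields $g(y)=(y'+an^{b})\bmod 1$. Writing $m:=-b\geq 0$ and $an^{b}=a/n^{m}$ with $0\leq a<n^{m}$, it remains to prove the additive claim: if $y'\in\text{orb}(x)$ and $0\le a<n^{m}$, then $(y'+a/n^{m})\bmod 1\in\text{orb}(x)$.

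This additive step is where I expect the only real friction, because the available tool, Lemma \ref{lma:kn+ainorb}, produces the \emph{contraction--shift} $(k+a)/n^{m}$ rather than the \emph{rotation} $(y'+a/n^{m})\bmod 1$; bridging the two requires bookkeeping of integer parts. Concretely, I would apply Lemma \ref{lma:kninorb} once more to obtain $w:=n^{m}y'\bmod 1\in\text{orb}(x)$ and write $y'=(w+p)/n^{m}$ with $p:=\lfloor n^{m}y'\rfloor\in\{0,1,\dots,n^{m}-1\}$. Then $y'+a/n^{m}=(w+p+a)/n^{m}$, and writing $p+a=q+\varepsilon n^{m}$ with $\varepsilon\in\{0,1\}$ and $0\le q<n^{m}$ gives $y'+a/n^{m}=\varepsilon+(w+q)/n^{m}$; since $(w+q)/n^{m}\in[0,1[$, reducing mod $1$ discards the integer $\varepsilon$ and leaves $g(y)=(w+q)/n^{m}$. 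As $w\in\text{orb}(x)$ and $0\le q<n^{m}$, Lemma \ref{lma:kn+ainorb} (padding $q$ with leading zeros if it has fewer than $m$ digits, or using $w/n^m\in f^{-m}(w)$ directly when $q=0$) shows $g(y)\in\text{orb}(x)$, completing the proof. The main obstacle, then, is purely the careful handling of the $\bmod 1$ reduction and the overflow term $\varepsilon$; the structural content is already supplied by Lemmas \ref{lma:kninorb} and \ref{lma:kn+ainorb}.
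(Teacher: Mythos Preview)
Your proof is correct and takes essentially the same route as the paper's: put $g(y)$ in the normal form $an^{b}+yn^{c}\bmod 1$, peel off the power of $n$ via Lemma~\ref{lma:kninorb}, and handle the additive $n$-adic shift via Lemma~\ref{lma:kn+ainorb}. Your explicit bookkeeping of the overflow term $\varepsilon$ is in fact more careful than the paper's own argument; the only loose end is the ``padding'' remark, which is cleanly resolved by applying Lemma~\ref{lma:kn+ainorb} with the true digit count of $q$ and then one further use of Lemma~\ref{lma:kninorb} to divide by the remaining power of $n$.
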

	
	\begin{proof}
		Let $y \in \text{orb}(x)$. We have that $g(y) = an^b + yn^c \mod 1$. Let $n^{m-1} < a <n^m$. From Lemma \ref{lma:kn+ainorb}, we have $kn^{-m} + an^{-m}  \in \text{orb}(x)$ for any $k  \in \text{orb}(x)$.
		By Lemma \ref{lma:kninorb}, $yn^{c-b} \mod 1 \in \text{orb}(x)$. Replacing $k$ with $yn^{c-b} \mod 1$ we obtain $yn^{(c-b)-m} + an^{-m} \mod 1 \in \text{orb}(x)$. Using Lemma \ref{lma:kninorb}, we conclude that $n^{b+m}(yn^{(c-b)-m} + an^{-m}) \mod 1 = an^b + yn^c \mod 1 = g(y) \in \text{orb}(x)$.
	\end{proof}
	
	The goal of the next sequence of lemmas is to prove Lemma \ref{lma:rho(g)y=g(y)}. We start by giving an intuitive idea of the proof. We have that $\rho_x(g) = S_{\phi^{-1}(a_1)}S^*_{\phi^{-1}(b_1)} + \ldots + S_{\phi^{-1}(a_k)}S^*_{\phi^{-1}(b_k)}$. We know that there is one and only one $i$ such that $y \in b_i$. We will show that $S^*_{\phi^{-1}(b_j)}\delta_y = 0$ if $j \neq i$. Then, we will show that $S_{\phi^{-1}(a_i)}S^*_{\phi^{-1}(b_i)}\delta_y = \delta_{g(y)}$.
	
	\begin{lemma}
		\label{lma:SuDy}
		Let $y \in \text{orb}(x) $ and $u \in X^*$, such that $u = u_1\ldots u_k$. Then $S_u\delta_{y}= S_{u_1}\ldots S_{u_k}\delta_y = \delta_{a}$, where
		\[ a = yn^{-k} + \sum_{i=1}^{k}\frac{u_i-1}{n^i}. \]
		
	\end{lemma}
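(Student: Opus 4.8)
The plan is to argue by induction on the length $k$ of the word $u$, feeding in the defining action of each generator on the basis from Definition \ref{ref:Si}, namely $S_i\delta_z = \delta_{(z+(i-1))/n}$ for $z\in\text{orb}(x)$. Note first that this formula genuinely makes sense at every stage: since $(z+(i-1))/n$ is precisely the $i$-th preimage of $z$ under $f$, it lies in $\text{orb}(x)$, so each successive application of a generator $S_i$ again lands on a basis vector indexed by an orbit point, and the composite $S_u\delta_y$ is a single basis vector.

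For the base case $k=1$ the claim is immediate: $S_{u_1}\delta_y = \delta_{(y+(u_1-1))/n}$, and $(y+(u_1-1))/n = yn^{-1} + (u_1-1)/n$, which is exactly $a$ when $k=1$.

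For the inductive step I would peel off the leading letter, writing $u = u_1 u'$ with $u' = u_2\cdots u_k$ of length $k-1$, so that $S_u\delta_y = S_{u_1}\bigl(S_{u'}\delta_y\bigr)$. The induction hypothesis applied to $u'$ gives $S_{u'}\delta_y = \delta_b$ with
\[ b = yn^{-(k-1)} + \sum_{i=1}^{k-1}\frac{u_{i+1}-1}{n^i}. \]
Applying the generator formula once more yields $S_{u_1}\delta_b = \delta_{(b+(u_1-1))/n}$, and it only remains to simplify the index. Dividing through by $n$ turns $yn^{-(k-1)}$ into $yn^{-k}$, turns each term $\frac{u_{i+1}-1}{n^i}$ into $\frac{u_{i+1}-1}{n^{i+1}}$, and contributes an extra $\frac{u_1-1}{n}$. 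Reindexing the shifted sum by $j=i+1$ produces $\sum_{j=2}^{k}\frac{u_j-1}{n^j}$, which together with the $\frac{u_1-1}{n}$ term assembles into $\sum_{j=1}^{k}\frac{u_j-1}{n^j}$; hence the index equals $a$, completing the induction.

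There is no serious obstacle here: the statement is in essence a bookkeeping identity, and the only point requiring mild care is keeping the summation indices aligned when the leading factor $S_{u_1}$ is absorbed and the whole expression is divided by $n$. One could instead peel off the trailing letter $u_k$ and induct from the inside out, but then the reindexing is marginally less transparent, so I would favour removing the leading letter as above.
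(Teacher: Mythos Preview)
Your proof is correct and essentially identical to the paper's own argument: both induct on the length $k$, peel off the leading letter $u_1$, apply the induction hypothesis to $u_2\cdots u_k$, and then simplify $(b+(u_1-1))/n$ to obtain the claimed $a$. The paper's $a_{k-1}$ is exactly your $b$ up to the obvious reindexing of the sum.
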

	
	\begin{proof}
		We prove the result by induction in $k$ . For $k=1$, we have that $S_u\delta_y = S_{u_1}\delta_y = \delta_{\frac{y+(u_1-1)}{n}}$.
		
		Suppose the result is true for $k-1$. Then $S_u\delta_{y} = S_{u_1}(S_{u_2}\ldots S_{u_k}\delta_{y}) = S_{u_1}\delta_{a_{k-1}} = \delta_{a}$, where	
		\[ a_{k-1} = yn^{-(k-1)} + \sum_{i=2}^{k}\frac{u_i-1}{n^{i-1}}, \]
		from which we obtain
		\[  a = \frac{a_{k-1}+(u_1-1)}{n} = \frac{yn^{-(k-1)} + \displaystyle \sum_{i=2}^{k}\frac{u_i-1}{n^{i-1}}+(u_1-1)}{n} =  yn^{-k} + \sum_{i=1}^{k}\frac{u_i-1}{n^i}.  \]
		
		The result follows.
	\end{proof}

 Next, given a word $v$, we want to see how $S_v^*$ acts on $H_x$. Yet, in order to do this, we will first need Lemma \ref{lma:b<->y}.
	
	\begin{lemma}
		\label{lma:b<->y}
		Let $y \in \text{orb}(x) $ and $v \in X^*$, such that $v = v_1\ldots v_mv_{m+1}$. Then, $y \in \phi(v)$ if and only if $b \in \phi(v_{m+1})$, where
		\[ b = n^m\left(y - \sum_{i=1}^{m}\frac{v_i-1}{n^i}\right). \]
		\end{lemma}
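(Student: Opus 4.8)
The statement is essentially a change-of-variables identity, so the plan is to unwind both membership conditions into explicit inequalities and verify that one transforms into the other through an order-preserving affine map. First I would write out the two relevant intervals directly from Definition \ref{def:phi(x)}. Since $v = v_1\ldots v_m v_{m+1}$ has length $m+1$,
\[ \phi(v) = \left[\sum_{i=1}^{m+1}\frac{v_i-1}{n^i},\ \sum_{i=1}^{m+1}\frac{v_i-1}{n^i}+\frac{1}{n^{m+1}}\right[, \]
whereas the single-letter word $v_{m+1}$ gives
\[ \phi(v_{m+1}) = \left[\frac{v_{m+1}-1}{n},\ \frac{v_{m+1}}{n}\right[. \]

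Next I would expand the condition $y \in \phi(v)$ as the double inequality
\[ \sum_{i=1}^{m+1}\frac{v_i-1}{n^i}\ \le\ y\ <\ \sum_{i=1}^{m+1}\frac{v_i-1}{n^i}+\frac{1}{n^{m+1}}, \]
split the sum using $\sum_{i=1}^{m+1} = \sum_{i=1}^{m} + \frac{v_{m+1}-1}{n^{m+1}}$, and subtract $\sum_{i=1}^{m}\frac{v_i-1}{n^i}$ from all three members. This isolates the quantity $y-\sum_{i=1}^m\frac{v_i-1}{n^i}$ between $\frac{v_{m+1}-1}{n^{m+1}}$ and $\frac{v_{m+1}}{n^{m+1}}$. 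Multiplying through by the positive factor $n^m$ then yields exactly $\frac{v_{m+1}-1}{n}\le b < \frac{v_{m+1}}{n}$, which is the assertion $b \in \phi(v_{m+1})$.

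Since every manipulation used — splitting the finite sum, subtracting a constant, and scaling by $n^m>0$ — is an equivalence that respects the order of the real line, the chain of inequalities is reversible, and this immediately supplies both directions of the stated ``if and only if.'' I do not expect a genuine obstacle here; the only point demanding care is the bookkeeping with exponents, namely checking that the scaling factor is $n^m$ rather than $n^{m+1}$ so that the denominators $n^{m+1}$ collapse to $n$. I would also note that the hypothesis $y\in\text{orb}(x)$ plays no role in the computation itself (it only guarantees $y\in[0,1[$, where $\phi$ lives); it is retained simply because the lemma will be applied to orbit points in the subsequent analysis of how $S_v^*$ acts on $H_x$.
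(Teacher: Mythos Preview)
Your proposal is correct and matches the paper's own proof essentially line for line: the paper also writes out $b\in\phi(v_{m+1})$ as the inequality $\frac{v_{m+1}-1}{n}\le b<\frac{v_{m+1}}{n}$, divides by $n^m$, adds back $\sum_{i=1}^m\frac{v_i-1}{n^i}$, and identifies the result with $y\in\phi(v)$, each step written as an equivalence. Your remark that the orbit hypothesis is inert in the computation is accurate.
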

	\begin{proof}
The proof follows from the definition of $\phi$: $b \in \phi(v_{m+1}) \Leftrightarrow $
		\[\frac{v_{m+1}-1}{n} \leq  n^m\left(y - \sum_{i=1}^{m}\frac{v_i-1}{n^i}\right)  < \frac{v_{m+1}}{n} \Leftrightarrow \frac{v_{m+1}-1}{n^{m+1}} \leq  \left(y - \sum_{i=1}^{m}\frac{v_i-1}{n^i}\right)  <\frac{v_{m+1}}{n^{m+1}} \]
		\[ \Leftrightarrow \sum_{i=1}^{m+1}\frac{v_i-1}{n^i} \leq y < \sum_{i=1}^{m+1}\frac{v_i-1}{n^i} +\frac{1}{n^{m+1}} \Leftrightarrow y \in \phi(v).  \]
	\end{proof}
	
	\begin{lemma}
		\label{lma:S*vDy}	
		Let $y \in \text{orb}(x) $ and $v \in X^*$, such that $v = v_1\ldots v_m$ and $y \in \phi(v)$. Then $S^*_v\delta_{y}=\delta_{b}$, where
		\[ b = n^{m}\left(y - \sum_{i=1}^{m}\frac{v_i-1}{n^i}\right). \]
		
	\end{lemma}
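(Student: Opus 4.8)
The plan is to induct on the length $m$ of the word $v$, peeling off the \emph{last} letter at each stage so that Lemma \ref{lma:b<->y} can be brought to bear. For the base case $m=1$ the hypothesis $y\in\phi(v_1)=[\frac{v_1-1}{n},\frac{v_1}{n}[$ is precisely the condition under which $S_{v_1}^\ast$ acts nontrivially, so the formula for $S_i^\ast$ in Definition \ref{ref:Si} gives $S_{v_1}^\ast\delta_y=\delta_{ny-(v_1-1)}$; and $ny-(v_1-1)=n\bigl(y-\tfrac{v_1-1}{n}\bigr)=b$, which settles the case. (Note that $ny-(v_1-1)=ny \bmod 1=f(y)\in\text{orb}(x)$, so $\delta_b$ is a genuine basis vector.)

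For the inductive step I would write $v=v'v_m$ with $v'=v_1\cdots v_{m-1}$ of length $m-1$, so that $S_v^\ast=S_{v_m}^\ast S_{v'}^\ast$. Since $v'$ is a prefix of $v$, Lemma \ref{lma:phi_bijective}(2) yields $\phi(v)\subset\phi(v')$ and hence $y\in\phi(v')$; the induction hypothesis then applies to $v'$ and gives
\[ S_{v'}^\ast\delta_y=\delta_c,\qquad c=n^{m-1}\Bigl(y-\sum_{i=1}^{m-1}\frac{v_i-1}{n^i}\Bigr), \]
where $c\in\text{orb}(x)$ automatically, since by Definition \ref{ref:Si} each $S_i^\ast$ sends a basis vector indexed by $\text{orb}(x)$ either to $0$ or to another basis vector of the same form.

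It remains to apply $S_{v_m}^\ast$ to $\delta_c$, and the crucial point is that this does not annihilate $\delta_c$. Invoking Lemma \ref{lma:b<->y} with the decomposition $v=v_1\cdots v_{m-1}v_m$ (so the lemma's index is $m-1$ and its final letter is $v_m$), the standing hypothesis $y\in\phi(v)$ translates \emph{exactly} into $c\in\phi(v_m)=[\frac{v_m-1}{n},\frac{v_m}{n}[$. Hence the formula for $S_{v_m}^\ast$ gives $S_{v_m}^\ast\delta_c=\delta_{nc-(v_m-1)}$, and the computation
\[ nc-(v_m-1)=n^m\Bigl(y-\sum_{i=1}^{m-1}\frac{v_i-1}{n^i}\Bigr)-(v_m-1)=n^m\Bigl(y-\sum_{i=1}^{m}\frac{v_i-1}{n^i}\Bigr)=b \]
closes the induction, using $v_m-1=n^m\cdot\frac{v_m-1}{n^m}$ in the middle equality.

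The single genuine subtlety—and the only place where the hypothesis $y\in\phi(v)$ is truly needed—is ensuring at the final step that $c$ lands inside the interval $\phi(v_m)$, so that $S_{v_m}^\ast$ returns a basis vector rather than $0$. This is exactly the statement that Lemma \ref{lma:b<->y} was engineered to supply, and once it is in hand the remainder is the routine bookkeeping of the telescoping identity displayed above.
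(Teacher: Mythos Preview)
Your proof is correct and follows essentially the same route as the paper's: induction on the word length, peeling off the last letter, using Lemma~\ref{lma:phi_bijective}(2) to place $y$ in $\phi(v')$ for the induction hypothesis, and then invoking Lemma~\ref{lma:b<->y} to ensure the final $S_{v_m}^\ast$ does not annihilate. The only difference is cosmetic indexing (you step from $m-1$ to $m$, the paper from $m$ to $m+1$), and you add the helpful remark that the intermediate indices remain in $\text{orb}(x)$.
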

	
	\begin{proof}
		We will prove the result by induction on $m$. For $m = 1$, we can write $v = v_1 = i$. If $y \in \phi(v) = [\frac{i-1}{n},\frac{i}{n}[$, we have that $S^*_v\delta_y = \delta_{yn-(i-1)}$.
		
		Now, suppose that the result is true for $m$. We can use Lemma \ref{lma:phi_bijective} (2) to conclude that, since $v_1\ldots v_{m}$ ia a prefix of $v$, we have that $\phi(v)\subset\phi(v_1\ldots v_{m})$, and thus $y \in \phi(v_1\ldots v_{m})$. We can thus apply the induction hypothesis to conclude that $S^*_v\delta_{y} = S^*_{v_{m+1}}(S^*_{v_{m}}\ldots S^*_{v_1}\delta_{y}) = S^*_{v_{m+1}}\delta_{b_{m}}$ where 	
		\[ b_{m} = n^{m}\left(y - \sum_{i=1}^{m}\frac{v_i-1}{n^i}\right). \]
		By Lemma \ref{lma:b<->y}, $b_{m} \in \phi(v_{m+1})$ and thus $S^*_{v_{m+1}}\delta_{b_{m}} = \delta_{n(b_m)-(v_{m+1}-1)} = \delta_{b}$ where
		\[ b = n^{m+1}\left(y - \sum_{i=1}^{m}\frac{v_i-1}{n^i}\right) - (v_{m+1}-1) = n^{m+1}\left(y - \sum_{i=1}^{m+1}\frac{v_i-1}{n^i}\right). \]
		
	\end{proof}

    Knowing how $S_v^*$ acts on $H_x$, we can now show that if $y \notin \phi(v)$, $S_v^*\delta_y = 0$.
    	
	\begin{lemma}
		\label{S*Dy=0}
		Let $v \in X^*$. If $y \notin \phi(v)$ then $S^*_v\delta_y = 0$.
	\end{lemma}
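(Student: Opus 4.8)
The plan is to argue by induction on the length $m$ of the word $v = v_1 \cdots v_m$, using the explicit formula for $S_i^*$ as the base case and Lemmas \ref{lma:S*vDy} and \ref{lma:b<->y} to run the inductive step.

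First I would handle $m = 1$. Writing $v = v_1 = i$, the set $\phi(i) = [\frac{i-1}{n}, \frac{i}{n}[$ is exactly the interval appearing in the displayed formula for $S_i^*$, so the hypothesis $y \notin \phi(v)$ says precisely that $y$ lies outside that interval, and hence $S_v^* \delta_y = S_i^* \delta_y = 0$ immediately.

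For the inductive step I would assume the claim for all words of length $m$ and take $v = v_1 \cdots v_{m+1}$ with $y \notin \phi(v)$. Factoring $S_v^* = S_{v_{m+1}}^* (S_{v_m}^* \cdots S_{v_1}^*) = S_{v_{m+1}}^* S_{v_1 \cdots v_m}^*$, I would split into two cases according to whether $y \in \phi(v_1 \cdots v_m)$. If $y \notin \phi(v_1 \cdots v_m)$, then the induction hypothesis applied to the length-$m$ prefix gives $S_{v_1 \cdots v_m}^* \delta_y = 0$, and so $S_v^* \delta_y = 0$. If instead $y \in \phi(v_1 \cdots v_m)$, then Lemma \ref{lma:S*vDy} gives $S_{v_1 \cdots v_m}^* \delta_y = \delta_b$ with $b = n^m(y - \sum_{i=1}^m \frac{v_i-1}{n^i})$; since this $b$ is exactly the quantity occurring in Lemma \ref{lma:b<->y} and $y \notin \phi(v)$, the contrapositive of that lemma yields $b \notin \phi(v_{m+1})$, whence $S_{v_{m+1}}^* \delta_b = 0$ by the base case, and again $S_v^* \delta_y = 0$.

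I expect no serious obstacle here, as this is a clean induction. The only points requiring care are keeping the order of factors straight in $S_v^* = S_{v_{m+1}}^* \cdots S_{v_1}^*$, and verifying that the vector $b$ produced by Lemma \ref{lma:S*vDy} coincides with the quantity whose membership in $\phi(v_{m+1})$ is governed by Lemma \ref{lma:b<->y}, so that the two cases of the induction dovetail correctly.
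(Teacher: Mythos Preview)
Your proposal is correct and follows essentially the same approach as the paper: induction on the length of $v$, with the base case handled by the explicit formula for $S_i^*$, and the inductive step split into the same two cases using Lemma~\ref{lma:S*vDy} and Lemma~\ref{lma:b<->y}. The only cosmetic difference is that you explicitly appeal to the base case for the final $S_{v_{m+1}}^*\delta_b = 0$, whereas the paper leaves this implicit.
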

	
	\begin{proof}
		
		Let $v \in X^*$, then we can write $v = v_1v_2\ldots v_m$. We will prove by induction on $m$ that if $y \notin \phi(v)$, then $S^*_v\delta_y = 0$. For $m = 1$, we have that $v \in \{1,2,..,n\}$, and thus $\phi(v) = [\frac{v-1}{n}, \frac{v}{n}[$. Then $y \notin \phi(v)$ is equivalent to $y \notin [\frac{v-1}{n}, \frac{v}{n}[$ and thus $S^*_v\delta_y = 0$.
		
		Now, suppose the result is true for $m$ and let $v = v_1v_2\ldots v_{m+1}$. Suppose that $y \notin \phi(v)$. We have that either $y \in \phi(v_1v_2\ldots v_m)$ or $y \notin \phi(v_1v_2\ldots v_m)$. In the latter case, since $S^*_v\delta_y = S^*_{v_{m+1}}(S^*_{v_m}\ldots S^*_{v_2}S^*_{v_1}\delta_y)$, we have by the induction hypothesis that $S^*_v\delta_y = 0$. In the former case, we can apply Lemma \ref{lma:S*vDy} to conclude that $S^*_v\delta_y = S^*_{v_{m+1}}\delta_b$. By Lemma \ref{lma:b<->y}, $y\notin \phi(v)$ implies that $b \notin \phi(v_{m+1})$. Thus $S^*_v\delta_y = S^*_{v_{m+1}}\delta_b = 0$.
	\end{proof}
	
	The next lemma completes the proof of Theorem \ref{thm:bigs4}.
	
	\begin{lemma}
		\label{lma:rho(g)y=g(y)}
		Let $y \in \text{orb}(x) $. Then, $\rho_x(g)\delta_{y} = \delta_{g(y)}$.
	\end{lemma}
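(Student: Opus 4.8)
The plan is to evaluate $\rho_x(g)$ directly on the basis vector $\delta_y$, collapsing the sum that defines $\rho_x(g)$ to a single surviving term and then computing that term with the action formulas already established. Write $g = \{(a_1,b_1),\ldots,(a_N,b_N)\}$ as a map between the two partitions $\{a_i\}$ and $\{b_i\}$ of $[0,1[$. By Definition \ref{def:rho(g)} applied to $\pi_x$, together with the explicit form of $\Psi(g)$,
\[ \rho_x(g)\delta_y = \sum_{i=1}^{N} S_{\phi^{-1}(a_i)}S^*_{\phi^{-1}(b_i)}\delta_y. \]
Since $\{b_1,\ldots,b_N\}$ is a partition of $[0,1[$ and $y \in \text{orb}(x)\subset[0,1[$, there is exactly one index $i$ with $y \in b_i = \phi(\phi^{-1}(b_i))$. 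For every $j \neq i$ we have $y \notin \phi(\phi^{-1}(b_j))$, so Lemma \ref{S*Dy=0} gives $S^*_{\phi^{-1}(b_j)}\delta_y = 0$, and the whole sum collapses to the single term $S_{\phi^{-1}(a_i)}S^*_{\phi^{-1}(b_i)}\delta_y$.

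Next I would compute this surviving term in two steps. Put $v = \phi^{-1}(b_i) = v_1\cdots v_l$ and $u = \phi^{-1}(a_i) = u_1\cdots u_k$, so that $b_i = [\beta_i,\beta_i+n^{-l}[$ and $a_i = [\alpha_i,\alpha_i+n^{-k}[$ with $\beta_i = \sum_{r=1}^{l}\frac{v_r-1}{n^r}$ and $\alpha_i = \sum_{r=1}^{k}\frac{u_r-1}{n^r}$. Since $y \in \phi(v)$, Lemma \ref{lma:S*vDy} yields $S^*_v\delta_y = \delta_b$ with $b = n^{l}(y-\beta_i)$; feeding this into Lemma \ref{lma:SuDy} gives $S_u\delta_b = \delta_c$ with
\[ c = b\,n^{-k} + \alpha_i = \alpha_i + n^{\,l-k}\,(y-\beta_i). \]
It then remains to recognize $c$ as $g(y)$: the right-hand side is exactly the affine bijection of the piece containing $y$ onto its image piece, with breakpoints $\alpha_i,\beta_i$ and slope a power of $n$ fixed by the prefix lengths $k,l$, which is the restriction of $g$ recorded by the piecewise-linear formula \eqref{eqtablePL}. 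This gives $\rho_x(g)\delta_y = \delta_{g(y)}$ and completes the proof of Theorem \ref{thm:bigs4}.

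I expect the only genuinely delicate point to be this last identification. All the operator manipulations are routine substitutions into the formulas of Lemmas \ref{lma:SuDy} and \ref{lma:S*vDy}, with the partition property supplying the uniqueness of $i$ and Lemma \ref{S*Dy=0} supplying the vanishing of the off-diagonal summands. The care is entirely in the bookkeeping of the final affine map: one must track which of $a_i,b_i$ plays the role of domain and which of range, so that the slope $n^{\,l-k}$ and the endpoints $\alpha_i,\beta_i$ line up precisely with the convention fixed in \eqref{eqtablePL}; this orientation is where an inverse could slip in if one is not careful.
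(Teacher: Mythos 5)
Your proposal is correct and follows essentially the same route as the paper's proof: the partition property plus Lemma \ref{S*Dy=0} collapse the sum to the single surviving term indexed by the piece containing $y$, and Lemmas \ref{lma:S*vDy} and \ref{lma:SuDy} then produce the affine expression $\alpha_i + n^{\,l-k}(y-\beta_i)$, which is identified with $g(y)$. The orientation concern you flag at the end is genuine but is a convention issue in the source rather than a gap in your argument --- the paper's own proof simply names the computed point $g(y)$ without reconciling the table convention against \eqref{eqtablePL}.
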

	
	\begin{proof}
		We have that $\rho_x(g) = S_{c_1}S^*_{d_1} + S_{c_2}S^*_{d_2} \ldots + S_{c_l}S^*_{d_l}$ where $c_i,d_i \in X^*$ for all $i$. Let $y \in \text{orb}(x)$. Since there is a bijective correspondence between the $d_i$'s, and a partition of $[0,1[$, we have that there is a unique $j$ such that $y \in \phi(d_j)$. By Lemma \ref{S*Dy=0}, we conclude that
		$$\rho_x(g)\delta_y = (S_{c_1}S^*_{d_1} + S_{c_2}S^*_{d_2} \ldots + S_{c_l}S^*_{d_l})\delta_y  = S_{c_j}S^*_{d_j}\delta_y.$$
		Let us denote $u = c_j$, and $v = d_j$. By Lemma \ref{lma:S*vDy}, $S^*_v\delta_y = \delta_{b}$ where
		$$b =  n^m\left(y - \sum_{i=1}^{m}\frac{v_i-1}{n^i}\right)$$
		and by Lemma \ref{lma:SuDy}, $S_u\delta_{b} = \delta_{g(y)}$, where
		$$g(y) = bn^{-k} + \sum_{i=1}^{k}\frac{u_i-1}{n^i} = n^{m-k}\left(y - \sum_{i=1}^{m}\frac{v_i-1}{n^i}\right) + \sum_{i=1}^{k}\frac{u_i-1}{n^i}.$$
		A linear transformation mapping $\phi(u)$ to $\phi(v)$.
	\end{proof}

	\subsection{Cuntz algebra embeddings and Higman-Thompson representations}
	\label{Representations of $V_n$ and Embeddings}
	
Let $n\in\mathbb{N}$ and $x \in [0,1[$. Recall the definition of orbit $\text{orb}_n(x)$ from \eqref{eq:orb} and let $\hxn=l^2(\text{orb}_n(x))$.  Our goal in this section is to study the family $\{\rho_x\}_{x\in [0,1[}$ of representations of the Higman-Thompson group $V_n$ introduced in the previous one.
	
We start by observing that for all $n>1$, the Cuntz algebra $\mathcal{O}_n$ is simple, as proven by Cuntz in \cite{cuntz}. Hence, every *-homomorphism of $\mathcal{O}_n$ is injective. In particular, the maps $\pi_x$ (see Theorem \ref{thm:pi}) and $\iota$ (see Lemma \ref{thm:OninO2gen}) are injective. Recall from Notation \ref{not1} that $\pixnk$ as the representation of $\mathcal{O}_{k(n-1)+1}$ in $\hxk$ such that the images of the generators are $\hat{S_1},....., \hat{S}_{k(n-1)+1}$. In this fixed context, we also use $S_1,....,S_n$ as the images of the generators of $\mathcal{O}_n$ in the representation $\pixn$ on $\hxn$ (see Notation \ref{not1}).
	
	\begin{definition}
		\label{def:iota'}
		Let $\iota': \pixnk(\mathcal{O}_{k(n-1)+1})\rightarrow B(\hxn)$ be defined as
		\[ \iota'(\hat{S_1}) = S_1^{k} \hspace*{30pt} \iota'(\hat{S}_{i(n-1)+j}) = S_1^{k-i}S_j\]	
		for $0\leq i < k$, $2 \leq j \leq n$.
	\end{definition}
	
Since all this maps are injective, we automatically get that $\iota'$ is injective. Furthermore, since $\iota$ is a *-homomorphism from $\mathcal{O}_{k(n-1)+1}$ to $\mathcal{O}_n$, it is continuous. Hence, due to the continuity of $\pixnk$ and $\pixn$,  $\iota'$ is continuous.  We obtain the following commutative diagram:
	
	\[ \begin{tikzcd}
	\mathcal{O}_{k(n-1)+1} \arrow[swap]{d}{\iota} & \arrow{l}{\quad {(\pixnk)}^{-1}}  \ \ \ \quad \pixnk(\mathcal{O}_{k(n-1)+1}) \arrow{d}{\iota'} \\%
	\mathcal{O}_n \arrow{r}{\pixn}& \pi'_x(\mathcal{O}_n).
	\end{tikzcd}
	\]

	\begin{proposition}
		\label{prp:YpathX}
		Let $y \in \text{orb}_m(x)$ for some $m\geq 2$. Then $\delta_y = T_1T_2\ldots T_k\delta_x$ for some $$T_1, T_2, \ldots ,T_k \in \{\hat{S_1},\ldots,\hat{S_m},\hat{S}^*_1,\ldots,\hat{S}^*_m\}.$$
	\end{proposition}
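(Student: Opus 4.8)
The plan is to exploit the characterization of the orbit relation recalled just before the statement: since $y \in \text{orb}_m(x)$, there exist nonnegative integers $p$ and $q$ with $f^p(y) = f^q(x)$, where $f(z) = mz \bmod 1$. Set $w := f^q(x) = f^p(y)$, which lies in $\text{orb}_m(x)$. I would build the word $T_1\cdots T_k$ as a concatenation of two pieces: a \emph{forward} piece made of adjoints $\hat{S}_i^*$ that carries $\delta_x$ to $\delta_w$, followed by a \emph{backward} piece made of the isometries $\hat{S}_i$ that carries $\delta_w$ back down to $\delta_y$. Throughout I use the action formulas of Definition \ref{ref:Si} (with $n$ replaced by $m$), namely $\hat{S}_i\delta_z = \delta_{(z+i-1)/m}$ and, for $z \in [\tfrac{i-1}{m},\tfrac{i}{m}[$, $\hat{S}_i^*\delta_z = \delta_{mz-(i-1)} = \delta_{f(z)}$; equivalently one may cite Lemmas \ref{lma:SuDy}, \ref{lma:S*vDy} and \ref{S*Dy=0} for the single-letter case.

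For the forward piece, I would run $f$ exactly $q$ times starting from $x$. At step $j$ ($1 \le j \le q$) the current iterate $f^{j-1}(x)$ lies in $[0,1[$, so there is a unique index $i_j \in \{1,\dots,m\}$ with $f^{j-1}(x) \in [\tfrac{i_j-1}{m},\tfrac{i_j}{m}[$; since $m f^{j-1}(x) \in [i_j-1,i_j[$, applying $\hat{S}_{i_j}^*$ gives $\hat{S}_{i_j}^*\,\delta_{f^{j-1}(x)} = \delta_{f^{j}(x)}$. Iterating yields $\delta_w = \hat{S}_{i_q}^*\cdots \hat{S}_{i_1}^*\,\delta_x$, with every intermediate point $f^{j}(x)$ in $\text{orb}_m(x)$, so each basis vector appearing is legitimate.

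For the backward piece, set $y_j := f^{p-j}(y)$ for $j=0,\dots,p$, so that $y_0 = w$, $y_p = y$, and $f(y_j) = y_{j-1}$. Thus $y_j$ is the particular preimage $y_j = \tfrac{y_{j-1}+(c_j-1)}{m}$ of $y_{j-1}$, where $c_j := 1 + \lfloor m\,y_j\rfloor \in \{1,\dots,m\}$, and consequently $\hat{S}_{c_j}\,\delta_{y_{j-1}} = \delta_{y_j}$. Iterating gives $\delta_y = \hat{S}_{c_p}\cdots \hat{S}_{c_1}\,\delta_w$. Concatenating the two pieces produces $\delta_y = \hat{S}_{c_p}\cdots \hat{S}_{c_1}\,\hat{S}_{i_q}^*\cdots \hat{S}_{i_1}^*\,\delta_x$, a word of length $p+q$ in the generators and their adjoints, which is exactly the asserted form.

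The only genuinely delicate point is the backward piece: at each of the $p$ steps there are $m$ distinct preimages to choose from, and one must follow precisely the chain leading to $y$ rather than to some other point of the fiber. This is what forces the choice $c_j = 1 + \lfloor m\,y_j\rfloor$, and the reason it is well defined is that the relation $f^p(y)=w$ pins down the entire intermediate sequence $y_j = f^{p-j}(y)$ in advance; the forward piece, by contrast, is deterministic since $f$ is single-valued. I expect no further obstacle, the remaining verifications being the routine ``mod $1$'' identities already established in the cited lemmas.
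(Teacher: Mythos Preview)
Your argument is correct. It differs from the paper's in presentation rather than essence: the paper argues abstractly that the graph on $\text{orb}_m(x)$, with an edge between $z$ and $(z+i)/m$, is connected (since it is built inductively from $x$), and then reads off the word $T_1\cdots T_k$ from any path joining $x$ to $y$. You instead invoke directly the equivalence $f^p(y)=f^q(x)$ and produce an explicit word of the special shape $\hat{S}_{c_p}\cdots\hat{S}_{c_1}\,\hat{S}_{i_q}^*\cdots\hat{S}_{i_1}^*$, i.e.\ adjoints followed by isometries. Your route has the minor advantage of yielding a canonical ``normal form'' $\hat{S}_u\hat{S}_v^*$ for the word, whereas the paper's path argument may produce an arbitrary alternation of $\hat{S}_i$'s and $\hat{S}_i^*$'s; conversely, the paper's graph picture is a touch more conceptual and does not require tracking the indices $i_j$, $c_j$ explicitly. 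Neither approach needs anything beyond Definition~\ref{ref:Si}, and both are of comparable length.
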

	
\begin{proof}
By definition, $\text{orb}_m(x) = \{f^z(x): z\in \mathbb{Z}, f(x) = mx \mod 1\}$. We can represent the orbit by a graph whose vertices are the elements of $\text{orb}_m(x)$, and $y-z$ if there is an $i \in \{0,\ldots,(m-1)\}$ such that $y = \frac{z+i}{m}$, or $z = \frac{y+i}{m}$. The graph must be connected, since it is inductively constructed from $x$ adding the vertices corresponding to the operations $y \mapsto \frac{y-i}{m}$ and $y \mapsto ym\mod 1$. Let us denote by $x_0, x_1, x_2, \ldots, x_k$ a path starting in $x_0 = x$ and ending on $x_k = y$. Given $x_j$ and $x_{j+1}$, we either have that $x_{j+1} = \frac{x_{j}+i}{m}$ or $x_j = \frac{x_{j+1}+i}{m}$. In the first case, we conclude that $\delta_{x_{j+1}} = \hat{S}_{i+1}\delta_{x_j}$. In the second case, we conclude that  $\delta_{x_{j+1}} = \hat{S}^*_{i+1}\delta_{x_j}$. Repeating the process $k$ times, we conclude that there exist $T_1, T_2, \ldots ,T_k \in \{\hat{S}_1,\ldots,\hat{S}_m,\hat{S}^*_1,\ldots,\hat{S}^*_m\}$ such that $\delta_y = T_1T_2\ldots T_k\delta_x$.
\end{proof}

We now use Proposition \ref{prp:YpathX} in the following definition.
	\begin{definition}
		\label{def:U}
		We define $U:\hxk\rightarrow \hxn$ as the map such that, given $\delta_y \in \hxk$
		\[ U(\delta_y) = U(T_1\ldots T_k\delta_x) = \iota'(T_1\ldots T_k)\delta_x. \]
	\end{definition}
	
	We start by showing that $U$ is well defined. Suppose that $\delta_y = T_1\ldots T_k\delta_x$ and $\delta_y = L_1\ldots L_l\delta_x$. Then, because $\iota'$ is a map, $\iota'(L_1\ldots L_l) = \iota'(T_1\ldots T_k)$, and thus $U( T_1\ldots T_k\delta_x) = U(L_1\ldots L_l\delta_x)$. Having defined $U$ on the basis of $\hxk$, we can extend it to the linear span and then by continuity and density to the whole space $\hxk$.
	
	We claim that $U:\hxk\rightarrow U(\hxk)$ is unitary. In fact, $U$ is injective and linear since $\iota'$ is injective and linear. Therefore, given $\delta_y, \delta_z \in \hxk$, we have
	\[ \langle U\delta_y, U\delta_z \rangle_{\hxn}  = \left\{
	\begin{array}{ll}
	1 & \text{; } U\delta_y = U\delta_z\\
	0 & \text{; } U\delta_y \neq U\delta_z
	\end{array}
	\right.  = \left\{
	\begin{array}{ll}
	1 & \text{; } \delta_y = \delta_z\\
	0 & \text{; } \delta_y \neq \delta_z
	\end{array}
	\right. = \langle \delta_y, \delta_z \rangle_{\hxk}. \]
	Hence, $U$ is unitary in $\hxk$ because it is unitary on the linear span of its basis. The claim follows.
	
	Let $\rhoxn:V_n\rightarrow B(\hxn)$ be the representation of $V_n$ in $\hxn$ (see Notation \ref{not1}). We want to study the relation between $\rhoxk$ and $\rhoxn(\E(g))$, the restriction of $\rhoxn$ to the elements of a subgroup of $V_n$ isomorphic to $V_{k(n-1)+1}$. (Recall that $\E$ is the embedding of $V_{k(n-1)+1}$ in $V_n$, introduced in Theorem \ref{thm:EmbFinT}.) The following theorem gives us the relation between these maps.
	
	\begin{theorem}
		\label{thm:r'E=s'r}
	For $g \in V_{k(n-1)+1}$ we have $\rhoxn(\E(g)) = \iota'(\rhoxk(g))$.
	\end{theorem}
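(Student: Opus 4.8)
The plan is to unwind every definition involved and reduce the claim to the commutative square recorded right after Definition \ref{def:iota'}. Fix $g\in V_{k(n-1)+1}$ and set $w:=\Psi_{k(n-1)+1}(g)\in\mathcal{O}_{k(n-1)+1}$. By Definition \ref{def:rho(g)} we have $\rhoxk=\pixnk\circ\Psi_{k(n-1)+1}$ and $\rhoxn=\pixn\circ\Psi_n$, so the right-hand side of the statement is $\iota'(\rhoxk(g))=\iota'(\pixnk(w))$. For the left-hand side, the definition $\E=\Psi_n^{-1}\circ\iota\circ\Psi_{k(n-1)+1}$ (equivalently, the commuting square of Section \ref{Embeddings of the Higman-Thompson's groups}) gives $\Psi_n(\E(g))=\iota(w)$, whence $\rhoxn(\E(g))=\pixn(\Psi_n(\E(g)))=\pixn(\iota(w))$. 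Thus the theorem is equivalent to the single identity $\iota'(\pixnk(w))=\pixn(\iota(w))$.

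Next I would establish this identity by showing that the two $*$-homomorphisms $\iota'\circ\pixnk$ and $\pixn\circ\iota$ from $\mathcal{O}_{k(n-1)+1}$ into $B(\hxn)$ coincide; it then applies in particular to $w$. Since both maps are $*$-homomorphisms, it suffices to check agreement on the generators $\hat{s}_1,\dots,\hat{s}_{k(n-1)+1}$. On $\hat{s}_1$ the first map gives $\iota'(\pixnk(\hat{s}_1))=\iota'(\hat{S}_1)=S_1^k$, while the second gives $\pixn(\iota(\hat{s}_1))=\pixn(s_1^k)=S_1^k$; on $\hat{s}_{i(n-1)+j}$ both produce $S_1^{k-i}S_j$, by the parallel definitions of $\iota$ in Lemma \ref{thm:OninO2gen} and of $\iota'$ in Definition \ref{def:iota'}, together with $\pixn(s_\ell)=S_\ell$ and $\pixnk(\hat{s}_\ell)=\hat{S}_\ell$. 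Agreement on a generating set forces agreement on all of $\mathcal{O}_{k(n-1)+1}$, which is exactly the commutativity asserted by the diagram following Definition \ref{def:iota'}.

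The only point needing a word of care is the step $\rhoxn(\E(g))=\pixn(\iota(w))$, which presupposes that $\iota(w)$ lies in $\Psi_n(V_n)$ so that $\Psi_n^{-1}$ may legitimately be applied; this is precisely what was proved in Theorem \ref{thm:EmbFinT} (that $\iota$ carries $\Psi_{k(n-1)+1}(V_{k(n-1)+1})$ into $\Psi_n(V_n)$), together with the faithfulness of $\Psi_n$. Beyond that, the argument is a purely algebraic diagram chase with no analytic input, so I do not anticipate a genuine obstacle; the main thing is to keep the four maps $\iota,\iota',\pixnk,\pixn$ and the two copies $\Psi_{k(n-1)+1},\Psi_n$ straight, and to verify the generator-by-generator match so that the established commuting square can be invoked.
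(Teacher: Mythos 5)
Your proof is correct and is essentially the paper's argument in abstract form: the paper writes $g$ as a table and computes both sides concretely as $\sum_i S_{f(a_i)}S^*_{f(b_i)}$ using Eq.~\eqref{eq:E} and Definition~\ref{def:gamma,f}, which is exactly your identity $\iota'\circ\pixnk=\pixn\circ\iota$ verified on generators and propagated by multiplicativity. Your explicit remark that $\iota$ must carry $\Psi_{k(n-1)+1}(V_{k(n-1)+1})$ into $\Psi_n(V_n)$ (Theorem~\ref{thm:EmbFinT}) so that $\Psi_n^{-1}$ applies is a point the paper leaves implicit, and is a welcome addition.
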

	
	\begin{proof}
		We will represent $g$ as a table. From Theorem \ref{thm:EmbFinT} we have that
		\begin{eqnarray*}
	    \rhoxn(\E(g)) &=& \rhoxn\left(\E\left( \begin{bmatrix}
		a_1 &  \ldots & a_m \\
		b_1 &  \ldots & b_m
		\end{bmatrix} \right)\right) =  \rhoxn\left( \begin{bmatrix}
		f(a_1) & \ldots & f(a_m) \\
		f(b_1) & \ldots & f(b_m)
		\end{bmatrix} \right)\\
		&=& S_{f(a_1)} S^*_{f(b_1)} + \ldots + S_{f(a_m)} S^*_{f(b_m)}.
	\end{eqnarray*}
		
		On the other hand, from definition \ref{def:gamma,f}, we have
		\begin{eqnarray*}
		\iota'(\rhoxk(g)) = \iota'( \hat{S}_{a_1} \hat{S}^*_{b_1} + \ldots + \hat{S}_{a_m} \hat{S}^*_{b_m}) &=& \iota'(\hat{S}_{a_1}) \iota'(\hat{S}^*_{b_1}) + \ldots + \iota'(\hat{S}_{a_m}) \iota'(\hat{S}^*_{b_m})\\
		&=&  S_{f(a_1)} S^*_{f(b_1)} + \ldots + S_{f(a_m)} S^*_{f(b_m)}.
		\end{eqnarray*}
	\end{proof}
	
The operator $U$ turns out to be an unexpectedly powerful tool.  In fact, Theorem \ref{thm:r'E=s'r} implies that for any $g \in V_{k(n-1)+1}$ and some $U(\xi) \in U(\hxk)$,
\begin{equation}\label{eq:inv}
\rhoxn(\E(g))U(\xi) = \iota'(\rhoxk(g))U(\xi) = U(\rhoxk(g)(\xi))
\end{equation}
where $\xi \in \hxk$.
Therefore, $U(\hxk)$ is a proper subset of $\hxn$ and invariant under $\rhoxn(\E(g))$ for any $g$. Therefore, the representation $\rhoxn\circ \E$ of $V_{k(n-1) + 1}$ in $\hxn$ is not irreducible. In fact, we will prove that $\rhoxn\circ \E$ is an irreducible representation of $V_{k(n-1) + 1}$ on $U(\hxk)$ instead. In order to do this, we will need Theorem \ref{thm:Upi=pi'U} which will allow us to relate what happens in $\hxk$ to what happens in $U(\hxk)$.

We can adapt the proof in Eq.\ \eqref{eq:inv} and check that $(\iota^\prime \circ \pi_x)(a)(U(\xi)) \in U(\hxk)$ for any $a\in \mathcal{O}_{k(n-1)+1}$ and $\xi \in \hxk$. This means that besides the representation  $\pi_x:\mathcal{O}_{k(n-1)+1}\rightarrow B(\hxk)$ we also have a well defined representation $\iota' \circ \pi_x:\mathcal{O}_{k(n-1)+1}\rightarrow B(U(\hxk))$. We now relate these two families of Cuntz algebras representations of the Cuntz algebra $\mathcal{O}_{k(n-1)+1}$.
	
	\begin{theorem}
		\label{thm:Upi=pi'U}
		Let $x,y \in [0,1[$. Then:
\begin{enumerate}
  \item $\pi_x$ and $\iota' \circ \pi_x$ are unitarily equivalent;
  \item $\pi_x$ is unitarily equivalent to $\pi_y$ if and only if $\iota'\circ\pi_x$ is unitarily equivalent to $\iota'\circ\pi_y$;
  \item $\pi_x$ is irreducible in $\hxk$ if and only if $\iota'\circ\pi_x$ is irreducible in $U(\hxk)$.
\end{enumerate}

	\end{theorem}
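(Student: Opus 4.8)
The plan is to prove part (1) directly and then obtain (2) and (3) as formal consequences, since unitary equivalence is symmetric, transitive, and preserves irreducibility. The intertwining unitary for (1) is precisely the operator $U$ of Definition \ref{def:U}. We already know that $U:\hxk\to U(\hxk)$ is unitary, and (by the computation adapting Eq.\ \eqref{eq:inv}) that $U(\hxk)$ is invariant under $(\iota'\circ\pi_x)(\mathcal{O}_{k(n-1)+1})$, so that $\iota'\circ\pi_x$ is a genuine representation on $U(\hxk)$. Thus the whole of (1) reduces to checking the intertwining relation $U\pi_x(a)=(\iota'\circ\pi_x)(a)U$ for all $a\in\mathcal{O}_{k(n-1)+1}$.

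First I would verify this on generators $s_i$ and on basis vectors, since both sides are bounded and linear. Fix $\delta_y\in\hxk$ and, using Proposition \ref{prp:YpathX}, write $\delta_y=T_1\cdots T_t\,\delta_x$ with each $T_r\in\{\hat{S}_1,\ldots,\hat{S}_{k(n-1)+1},\hat{S}^*_1,\ldots,\hat{S}^*_{k(n-1)+1}\}$. Then $\pi_x(s_i)\delta_y=\hat{S}_iT_1\cdots T_t\,\delta_x$ is again a basis vector (as $\hat{S}_i$ is an isometry), now expressed as a word in the generators applied to $\delta_x$, so the definition of $U$ together with the multiplicativity of $\iota'$ yields
\[ U\bigl(\pi_x(s_i)\delta_y\bigr)=\iota'(\hat{S}_iT_1\cdots T_t)\delta_x=\iota'(\hat{S}_i)\,\iota'(T_1\cdots T_t)\delta_x=(\iota'\circ\pi_x)(s_i)\,U(\delta_y). \]
Here multiplicativity of $\iota'$ is guaranteed by the commutative diagram defining it, $\iota'=\pixn\circ\iota\circ(\pixnk)^{-1}$, a composition of $*$-homomorphisms. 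The set of $a\in\mathcal{O}_{k(n-1)+1}$ satisfying $U\pi_x(a)=(\iota'\circ\pi_x)(a)U$ is a norm-closed $*$-subalgebra (it is closed under adjoints because $U$ is unitary onto its range, and $(\iota'\circ\pi_x)(a)$ preserves $U(\hxk)$) containing all generators, hence equals $\mathcal{O}_{k(n-1)+1}$. This proves (1).

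For (2), I would apply (1) both at $x$ and at $y$: writing $U_x$ and $U_y$ for the unitaries of Definition \ref{def:U} associated with $x$ and $y$, we have $\pi_x\sim\iota'\circ\pi_x$ and $\pi_y\sim\iota'\circ\pi_y$. By symmetry and transitivity of $\sim$, the equivalence $\pi_x\sim\pi_y$ holds if and only if $\iota'\circ\pi_x\sim\iota'\circ\pi_y$; concretely, if $W$ intertwines $\pi_x$ and $\pi_y$ then $U_yWU_x^{*}$ intertwines $\iota'\circ\pi_x$ and $\iota'\circ\pi_y$, and conjugating back gives the converse. For (3), unitary equivalence preserves irreducibility: conjugation by $U$ carries the commutant of $\pi_x(\mathcal{O}_{k(n-1)+1})$ onto the commutant of $(\iota'\circ\pi_x)(\mathcal{O}_{k(n-1)+1})$ in $B(U(\hxk))$, so one is trivial exactly when the other is.

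The only genuine content is the generator computation in (1); everything else is formal bookkeeping with equivalence relations. The point to be careful about — and what I expect to be the main (if modest) obstacle — is that $U$ is defined through a \emph{choice} of word representing each basis vector, so one must explicitly invoke the already-established well-definedness of $U$ in order to re-express $\hat{S}_i\delta_y$ as $\hat{S}_iT_1\cdots T_t\,\delta_x$ before applying $U$; combined with the multiplicativity of $\iota'$ this is exactly what makes the intertwining relation hold. I do not anticipate any serious difficulty beyond this.
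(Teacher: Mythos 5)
Your proposal is correct and follows essentially the same route as the paper: the operator $U$ of Definition \ref{def:U} is the intertwiner, the verification goes through Proposition \ref{prp:YpathX} together with the multiplicativity of $\iota'$, and parts (2) and (3) are obtained exactly as in the paper by conjugating intertwiners and commutants by $U$. Your extra step of first checking the relation on generators and basis vectors and then passing to the norm-closed $*$-subalgebra of elements satisfying it is in fact slightly more careful than the paper's direct computation for a general $a$ (where $\pi_x(a)T_1\cdots T_t\delta_x$ need not itself be a word in the generators applied to $\delta_x$), but it is the same argument in substance.
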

	
	\begin{proof}
	(1)\ Since we already proved that $U:\hxk\rightarrow U(\hxk)$ is unitary, it remains to show that given $a\in \mathcal{O}_{k(n-1)+1}$, $y \in \text{orb}_{k(n-1)+1}(x)$, we have $U(\pi_x(a)\delta_y) = (\iota'(\pi_x(a))U)\delta_y$, where $U$ was introduced in Definition  \ref{def:U}. Thus
		\[
		\begin{array}{llll}
		(\iota'(\pi_x(a))U)\delta_y&=& \iota'(\pi_x(a))(U(T_1\ldots T_t \delta_x)) & (\text{by Proposition } \ref{prp:YpathX}) \\
		&=& \iota'(\pi_x(a))\iota'(T_1\ldots T_t)\delta_x  & (\text{by Definition } \ref{def:U})\\
		&=& \iota'(\pi_x(a)T_1\ldots T_t)\delta_x& (\text{Since
		$\iota'$  is linear})\\
		&=& U(\pi_x(a)T_1\ldots T_t\delta_x)& (\text{by Definition } \ref{def:U})\\
		&=& U(\pi_x(a)\delta_y).&
		\end{array}
		\]

(2)\ Suppose that $\pi_x \sim \pi_y$, that is, that $\pi_x$  and $\pi_y$ are unitarily equivalent. Then, there is a unitary $K:\hxk \rightarrow \hyk$ such that $K\pi_x(a) = \pi_y(a)K$, for any $a \in \mathcal{O}_{k(n-1)+1}$.
		Using part (1) we obtain that $\iota'(\pi_y(a))(U_2KU^*_1) = (U_2KU^*_1) \iota'(\pi_x(a))$ which corresponds to the commutative diagram	
		\[ \begin{tikzcd}
		U_1(\hxk) \arrow{r}{U^*_1} \arrow{d}{\iota'(\pi_x(a))} & \hxk \arrow{r}{K} \arrow{d}{\pi_x(a)} & \hyk \arrow{r}{U_2} \arrow{d}{\pi_y(a)}  & U_2(\hyk) \arrow{d}{\iota'(\pi_y(a))} \\%
		U_1(\hxk)\arrow{r}{U^*_1}  & \hxk\arrow{r}{K} & \hyk \arrow{r}{U_2} & U_2(\hyk)
		\end{tikzcd}
		\]
		
		Hence $ \iota'\circ\pi_x \sim \iota'\circ\pi_y$. Now, suppose that $ \iota'\circ\pi_x \sim \iota'\circ\pi_y$. Then, there is a unitary $K: U_1(\hxk) \rightarrow U_2(\hxk)$ such that $K(\iota'\circ\pi_x)(a) = (\iota'\circ\pi_y)(a)K$, for any $a \in \mathcal{O}_{k(n-1)+1}$.
        Using Theorem \ref{thm:Upi=pi'U} we get $\pi_y(a)(U^*_2KU_1) = (U^*_2KU_1) \pi_x(a)$.
		Hence $\pi_x \sim \pi_y$.
		
		(3)\ Suppose that $\pi_x$ is not irreducible in $\hxk$.  Then, there is a $T \not \in \mathbb{C}1$ such that $\pi_x(a)T = T\pi_x(a)$, for any $a \in \mathcal{O}_{k(n-1)+1}$. By the previous diagram, this means that $\iota'(\pi_x(a))(UTU^*) = (UTU^*) \iota'(\pi_x(a))$. Suppose that $(UTU^*) = t$, where $t \in \mathbb{C}1$. Then, $T = U^*tU = t(U^*U) = t$, since $U$ is unitary. Therefore  $(UTU^*) \not \in \mathbb{C}1$, and so $\iota'(\pi_x)$ is not irreducible in $U(\hxk)$. An analogous argument implies that $\iota'(\pi_x)$ is not irreducible in $U(\hxk)$ if $\pi_x$ is not irreducible in $\hxk$. The result follows.
	\end{proof}

	
Theorem 6 in \cite{ar2008} implies that $\pi_x \sim \pi_y$ if and only if $x \sim y$, and that $\pi_x$ is irreducible in $\hxk$. Note that $\iota'\circ\pi_x: \mathcal{O}_{k(n-1)+1} \rightarrow B(U(\hxk))$ for every $x$. Therefore, using Theorem \ref{thm:Upi=pi'U}, we immediately obtain the following additional corollary.

\begin{corollary}
	\label{crl:pixy<->xy}
	Let $x,y \in [0,1[$. We have that $\iota'\circ\pi_x \sim \iota'\circ\pi_y$ if and only if $x \sim y$. Also, $\iota'\circ\pi_x$ is irreducible in $U(\hxk)$.
\end{corollary}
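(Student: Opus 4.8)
The plan is to chain together the external classification result cited from \cite{ar2008} with the transfer principle of Theorem \ref{thm:Upi=pi'U}, so that no genuinely new analysis is required. All of the substantive work has already been carried out in establishing Theorem \ref{thm:Upi=pi'U}; what remains is a purely formal concatenation of biconditionals, which is exactly why the corollary can be asserted as an immediate consequence.

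For the first assertion, I would argue as follows. Theorem~6 of \cite{ar2008} supplies the equivalence $\pi_x \sim \pi_y$ if and only if $x \sim y$ for the permutative representations of $\mathcal{O}_{k(n-1)+1}$ on the orbit spaces $\hxk$ and $\hyk$. On the other hand, part~(2) of Theorem \ref{thm:Upi=pi'U} states that $\pi_x \sim \pi_y$ holds if and only if $\iota'\circ\pi_x \sim \iota'\circ\pi_y$. Concatenating these two biconditionals yields
\[
\iota'\circ\pi_x \sim \iota'\circ\pi_y \iff \pi_x \sim \pi_y \iff x \sim y,
\]
which is precisely the stated equivalence.

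For the irreducibility claim, the same cited theorem asserts that each $\pi_x$ is irreducible on $\hxk$. Part~(3) of Theorem \ref{thm:Upi=pi'U} transports irreducibility across $\iota'$, namely $\pi_x$ is irreducible on $\hxk$ if and only if $\iota'\circ\pi_x$ is irreducible on $U(\hxk)$. Combining these two facts gives the irreducibility of $\iota'\circ\pi_x$ on $U(\hxk)$ directly.

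I do not anticipate any real obstacle, precisely because the potentially delicate points---that the unitary $U$ of Definition \ref{def:U} intertwines $\pi_x$ with $\iota'\circ\pi_x$, and that both unitary equivalence and irreducibility pass back and forth through $U$---were already settled in Theorem \ref{thm:Upi=pi'U}. The one item worth double-checking is bookkeeping: that in part~(2) the relevant intertwiners are built from the space-specific unitaries $U$ associated with $x$ and $y$ respectively, so that the equivalences are correctly phrased relative to the target spaces $U(\hxk)$ rather than all of $\hxn$. Since, as noted in the paragraph preceding Theorem \ref{thm:Upi=pi'U}, the composition $\iota'\circ\pi_x$ genuinely maps $\mathcal{O}_{k(n-1)+1}$ into $B(U(\hxk))$, everything is consistent and the corollary follows at once.
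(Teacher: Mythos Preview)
Your proposal is correct and follows exactly the same route as the paper: invoke Theorem~6 of \cite{ar2008} for the equivalence $\pi_x\sim\pi_y\iff x\sim y$ and the irreducibility of $\pi_x$, then apply parts~(2) and~(3) of Theorem~\ref{thm:Upi=pi'U} to transfer both statements to $\iota'\circ\pi_x$. The paper presents this as a one-line deduction immediately preceding the corollary, and your write-up simply unpacks that deduction in more detail.
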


Given $g \in V_{k(n-1)+1}$, we have by Definition \ref{def:rho(g)} that $\rhoxk(g) = \pi_x(\Psi(g))$, where $\Psi(g) \in \mathcal{O}_{k(n-1)+1}$. Given $g \in V_{k(n-1)+1}$, Theorem \ref{thm:r'E=s'r} tells us then that $ \rhoxn(\E(g)) = \iota'(\rhoxk) = \iota'(\pi_x(\Psi(g)))$. Furthermore, by considering the restriction of $\pi_x$ to $\Psi(V_{k(n-1)+1})$ (a subset of $\mathcal{O}_{k(n-1)+1}$), Theorem \ref{thm:Upi=pi'U} gives us this corollary.
	
	\begin{corollary}
		\label{crl:Urho=rho'U}
		Let $x \in [0,1[$. Then:
\begin{enumerate}
  \item $\rhoxk$ and $\rhoxn\circ \E$ are unitarily equivalent of $V_{k(n-1)+1}$ on $\hxk$ and $U(\hxk)$, respectively;
  \item $\rhoxk \sim \rhoyk$ if and only if $ \rhoxn\circ \E \sim \rhoyn\circ \E$. Also, $\rhoxk$ is an irreducible representation in $\hxk$ if and only if $\rhoxn\circ \E$ is irreducible in $U(\hxk)$.
\end{enumerate}
	\end{corollary}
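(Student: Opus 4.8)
The plan is to deduce everything from Theorem~\ref{thm:Upi=pi'U} by restricting the Cuntz algebra representations $\pi_x$ and $\iota'\circ\pi_x$ to the subset $\Psi(V_{k(n-1)+1})\subset\mathcal{O}_{k(n-1)+1}$ and translating the resulting statements through $\Psi$ and the embedding $\E$. The bridge is the identity $\rhoxk(g)=\pi_x(\Psi(g))$ from Definition~\ref{def:rho(g)} together with $\rhoxn(\E(g))=\iota'(\rhoxk(g))=\iota'(\pi_x(\Psi(g)))$ from Theorem~\ref{thm:r'E=s'r}, both valid for every $g\in V_{k(n-1)+1}$.

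For part (1), I would reuse the very same unitary $U:\hxk\to U(\hxk)$ of Definition~\ref{def:U} that implements $\pi_x\sim\iota'\circ\pi_x$ in Theorem~\ref{thm:Upi=pi'U}(1). Specializing the intertwining relation $U\pi_x(a)=\iota'(\pi_x(a))U$ to the element $a=\Psi(g)\in\mathcal{O}_{k(n-1)+1}$ and using the two identities above gives
\[
U\,\rhoxk(g)=U\,\pi_x(\Psi(g))=\iota'(\pi_x(\Psi(g)))\,U=\rhoxn(\E(g))\,U
\]
for all $g\in V_{k(n-1)+1}$. Since $U$ is unitary onto $U(\hxk)$ and, by Eq.~\eqref{eq:inv}, $U(\hxk)$ is invariant under $\rhoxn(\E(g))$, this exhibits $\rhoxk$ on $\hxk$ as unitarily equivalent to the restriction of $\rhoxn\circ\E$ to $U(\hxk)$.

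Part (2) then follows purely formally from part (1) together with the fact that unitary equivalence of group representations is an equivalence relation that preserves irreducibility. For the first assertion, part (1) gives $\rhoxk\sim\rhoxn\circ\E$ and $\rhoyk\sim\rhoyn\circ\E$; chaining these with symmetry and transitivity yields $\rhoxk\sim\rhoyk$ if and only if $\rhoxn\circ\E\sim\rhoyn\circ\E$. For the second assertion, since $\rhoxk$ and $\rhoxn\circ\E$ are unitarily equivalent, conjugation by $U$ carries the commutant of one group representation onto that of the other, so one commutant equals $\mathbb{C}1$ exactly when the other does; hence $\rhoxk$ is irreducible in $\hxk$ if and only if $\rhoxn\circ\E$ is irreducible in $U(\hxk)$.

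I do not expect a genuine obstacle, as the statement is essentially Theorem~\ref{thm:Upi=pi'U} read off on the subset $\Psi(V_{k(n-1)+1})$. The only point requiring care is bookkeeping: the unitary realizing the Cuntz algebra equivalence must be reused verbatim for the group representations, and $\rhoxn\circ\E$ must be interpreted as acting on the invariant subspace $U(\hxk)$ rather than on all of $\hxn$.
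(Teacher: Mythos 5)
Your proposal is correct and follows essentially the same route as the paper: the paper likewise obtains the corollary by combining $\rhoxk(g)=\pi_x(\Psi(g))$ with $\rhoxn(\E(g))=\iota'(\pi_x(\Psi(g)))$ from Theorem~\ref{thm:r'E=s'r} and then restricting Theorem~\ref{thm:Upi=pi'U} to the subset $\Psi(V_{k(n-1)+1})\subset\mathcal{O}_{k(n-1)+1}$, with the same unitary $U$ doing the intertwining. Your explicit derivation of part (2) from part (1) by transitivity of unitary equivalence is a harmless (and slightly cleaner) way of packaging what the paper leaves implicit.
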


\subsection{Unitarily equivalence and irreducibility of $\{\rho_x\}_{x\in [0,1[}$}
\label{sec: unitequiv}

We now study the unitarily equivalence and irreducibility of the Higman-Thompson groups representations $\{\rho_x\}_{x\in [0,1[}$.
	
	

	We need to introduce some concepts and an auxiliary result. A probability measure $\mu$ on a set $Y$ is said to be finitely additive, if for any collection of finite pairwise disjoint subsets of $Y$, we have
	$$\mu(\bigcup_{i=1}^{m}A_i) = \sum_{i=1}^{m}\mu(A_i).$$
	A group action of a discrete group $G$ is said to be non-amenable, if there is no finitely additive probability measure $\mu$ in $Y$ such that, for any subset $A$ of $Y$, and some $g\in G$, we have
	\[\mu(gA) = \mu(A).\]
	Given a group $G$ with a subgroup $H$, if the action of $H$ in $Y$ is non-amenable, then so is the action of $G$ in $Y$ since a $G$-invariant probability measure $\mu$ would be $H$-invariant. We now quote a result from \cite{olesen}.
	
	\begin{theorem}\hspace{1pt}
		\label{thm:olesen}
		\begin{enumerate}
			\item The action $g\cdot y = g(y)$ of $V_2$ on $[0,1[$ is non-amenable.
			\item Suppose that $G$ is a discrete group acting on a set $X$, and let $\rho$ denote the induced representation on $l^2(X)$. Then the action of $G$ on $X$ is non-amenable if and only if there exist
$g_1, . . ., g_m$ in $G$ so that
			\[ \frac{1}{m} \left\| \sum_{k=1}^{m} \rho(g_k) \right\| < 1. \]
		\end{enumerate}	
	\end{theorem}
	
Let $\pi_x: \mathcal{O}_n\to B(\hxn)$ be the representation of $V_n$ as in Notation \ref{not1}.		
The following result is the last ingredient needed for the proof of Theorem \ref{thm:rho<->xy}.
	\begin{theorem}
		\label{thm:Crho=piO}
		Let $n \geq 2$. Then
		\[ C^*_{\rhoxn}(V_n) = \pi_x(\mathcal{O}_n). \]
	\end{theorem}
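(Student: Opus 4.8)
The plan is to prove the equality $C^*_{\rhoxn}(V_n) = \pi_x(\mathcal{O}_n)$ by establishing the two inclusions separately. The inclusion $C^*_{\rhoxn}(V_n) \subseteq \pi_x(\mathcal{O}_n)$ is the easy direction and should be dispatched first: by Definition \ref{def:rho(g)}, every generator $\rhoxn(g) = \pi_x(\Psi(g))$ lies in $\pi_x(\mathcal{O}_n)$, and since $\pi_x(\mathcal{O}_n)$ is a norm-closed subalgebra of $B(\hxn)$, it contains the closed span of $\{\rhoxn(g): g \in V_n\}$, which is precisely $C^*_{\rhoxn}(V_n)$.

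The substantial work is the reverse inclusion $\pi_x(\mathcal{O}_n) \subseteq C^*_{\rhoxn}(V_n)$. Here the natural strategy is to show that $C^*_{\rhoxn}(V_n)$ contains a generating set for $\pi_x(\mathcal{O}_n)$, i.e.\ that the individual isometries $S_i = \pi_x(s_i)$ can be recovered from the algebra generated by the $\Psi(g)$. The obstruction is that $\Psi$ sends $V_n$ into the \emph{unitary} group of $\mathcal{O}_n$ (elements of the form $\sum_j s_{a_j}s_{b_j}^*$ with both rows admissible), whereas a single $s_i$ is only a proper isometry and is not itself of this form. First I would look for products and linear combinations of the unitaries $\Psi(g_1),\ldots,\Psi(g_r)$ inside $C^*_{\rhoxn}(V_n)$ whose closed span produces the $s_is_j^*$, which in turn generate all of $\mathcal{O}_n$; equivalently, one shows that the weakly dense $*$-subalgebra spanned by the words $s_\alpha s_\beta^*$ already lies in $C^*_{\rhoxn}(V_n)$. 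The mechanism for squeezing genuine isometries out of the group unitaries is exactly where I expect the hypotheses flagged in the introduction to enter: the non-amenability of the action underlying $\pi_x$ and the embedding $V_2 \hookrightarrow V_n$ from \cite{birget}, which one uses to guarantee that the commutant is small enough to force $C^*_{\rhoxn}(V_n)$ to be all of $\pi_x(\mathcal{O}_n)$ rather than a proper subalgebra.

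A cleaner route, which I would pursue in parallel, is to argue at the level of commutants and invariant subspaces. Since $\pi_x$ is irreducible (Theorem 6 of \cite{ar2008}), $\pi_x(\mathcal{O}_n)$ is weakly dense in $B(\hxn)$ and has trivial commutant $\mathbb{C}1$. The goal then reduces to showing $C^*_{\rhoxn}(V_n)' = \mathbb{C}1$ as well, for then the von Neumann algebra generated by $\rhoxn(V_n)$ is all of $B(\hxn)$, and combined with the easy inclusion one pins down the C*-algebra. Concretely, I would take $T \in C^*_{\rhoxn}(V_n)'$, use Theorem \ref{thm:bigs4} which identifies $\rhoxn(g)\delta_y = \delta_{g(y)}$ so that the $V_n$-action on $\{\delta_y: y \in \orb(x)\}$ is by the explicit point permutations $g.y = g(y)$, and exploit the transitivity of this action on the orbit (every $\delta_y$ is reachable from $\delta_x$, cf.\ Proposition \ref{prp:YpathX}) to show that $T$ is diagonal in the $\delta_y$ basis and then constant. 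Transitivity forces the diagonal entries to agree, giving $T \in \mathbb{C}1$.

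The main obstacle is the step where one must produce the proper isometries $S_i$ (or the rank-adjusting partial isometries $s_\alpha s_\beta^*$ with $|\alpha|\neq|\beta|$) from unitary group elements: the map $\Psi$ is length-preserving on tables, so no single group unitary changes the \textquotedblleft level,\textquotedblright{} and one genuinely needs the dynamical input (non-amenability, or the richness of $V_n$ coming from the $V_2$-embedding) to conclude that the weak closure is all of $B(\hxn)$ and hence that the norm-closed algebra recaptures $\pi_x(\mathcal{O}_n)$. I expect the proof to invoke \cite{birget} precisely to furnish enough group elements so that their images generate, modulo the closure, the full matrix units $\delta_y\delta_z^*$, at which point the identification with $\pi_x(\mathcal{O}_n)$ follows from irreducibility and the easy inclusion already established.
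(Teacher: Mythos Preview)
Your Route~2 (the commutant argument) cannot prove this theorem. Showing $C^*_{\rhoxn}(V_n)' = \mathbb{C}1$ only tells you that the \emph{von Neumann} algebra generated by $\rhoxn(V_n)$ is all of $B(\hxn)$; it says nothing about the \emph{norm}-closed algebra. An irreducible C*-subalgebra of $B(H)$ can easily be proper --- the compacts inside $B(H)$, or indeed $\pi_x(\mathcal{O}_n)$ itself, which is not all of $B(\hxn)$. So even granting trivial commutant together with the easy inclusion $C^*_{\rhoxn}(V_n) \subseteq \pi_x(\mathcal{O}_n)$, you cannot conclude equality: you have two irreducible C*-algebras, one inside the other, with no mechanism forcing them to coincide. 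The same confusion recurs at the end of your Route~1, where you write that the weak closure being all of $B(\hxn)$ implies ``hence that the norm-closed algebra recaptures $\pi_x(\mathcal{O}_n)$''; there is no such ``hence.'' (Incidentally, $\Psi$ is \emph{not} length-preserving on tables: the words $a_j$ and $b_j$ may have different lengths, so individual group unitaries already contain terms like $S_\alpha S_\beta^*$ with $|\alpha|\neq|\beta|$.)

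The paper's proof stays entirely at the C*-level and produces the $S_i$ explicitly as norm limits. The device you are missing is an \emph{averaging-and-powers} trick. For each $i$ one takes the subgroup $J_i \leq V_n$ fixing $[\tfrac{i-1}{n},\tfrac{i}{n}[$ pointwise, embeds $V_2$ into $J_i$ (this is where \cite{birget} enters), and invokes non-amenability of the $V_2$-action via Theorem~\ref{thm:olesen} to obtain $g_1,\dots,g_m \in J_i$ such that the average $t := \tfrac{1}{m}\sum_k \rhoxn(g_k)$ has norm strictly less than $1$ on the orthogonal complement of the range of $S_iS_i^*$, while $t$ acts as the identity on that range. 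Hence $t^k \to S_iS_i^*$ \emph{in norm}, so $S_iS_i^* \in C^*_{\rhoxn}(V_n)$. From there, multiplication by explicitly written group unitaries yields $S_iS_iS_i^*$ and then $S_nS_iS_i^*$ for each $i$, and summing the latter over $i$ gives $S_n = \sum_i S_nS_iS_i^*$, whence all $S_j$. Your Route~1 correctly names the ingredients (non-amenability, the Birget embedding) but never locates this norm-convergence mechanism, and without it the argument does not close.
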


	\begin{proof}
	Since $\rhoxn(g) = \pi_x(\Psi(g))$, we have that $\rhoxn(V_n) \subset \pi_x(\mathcal{O}_n)$. Therefore, $C^*_{\rhoxn}(V_n) \subset \pi_x(\mathcal{O}_n)$. In order to prove that $\pi_x(\mathcal{O}_n) \subset C^*_{\rhoxn}(V_n)$, we will show that the set $\{S_1,S_2, \ldots, S_n\} $ is in $C^*_{\rhoxn}(V_n)$.
	
	We will start by proving that $\{S_1S_1^*,S_2S^*_2,\ldots,S_nS_n^*\} \subset C^*_{\rhoxn}(V_n) $. Let $J_i$ denote the set of $h \in V$ such that $h(z) = z$ for all $z \in [\frac{i-1}{n},\frac{i}{n}[$. If $a$, $b$ are maps such that $a(y) = b(y) = y$ for all  $ y \in [\frac{i-1}{n},\frac{i}{n}[$, then so does their composition, and their inverse. Hence, $J_i$ is a subgroup of $V_n$. We want to prove that for each $i$, there is a subgroup of $J_i$ isomorphic to $V_n$. Consider the subgroups $K_1$ and $K_2$ of $V_n$ defined by\\
	$K_1 = \left\{g\in V_n: g(x) = x \text{ for all } x\in [0,\frac{n-1}{n}[\right\},$ and\\
	$K_2 = \left\{g\in V_n: g(x) = x \text{ for all } x\in [\frac{1}{n},1[\right\}.$

	Notice that $K_1$ is a subgroup of $J_i$ for $1 \leq i \leq n-1$, and that $K_2$ is a subgroup of $J_n$. We now try to prove that $K_1$ and $K_2$ are isomorphic to $V_n$. Consider the map $$h: [0,1[\rightarrow[0,\frac{1}{n}[,\quad h(x) = x/n$$
	and the application $v:K_2 \rightarrow V_n$ such that $v(g) = hgh^{-1}.$
	Then, $v$ is a homomorphism of groups. Furthermore, notice that $v(g)$ is the identity map, if and only if $g$ is the identity of $K_2$. Therefore, $v$ is injective. Finally, given $g \in V_n$, the map $p \in K_2$ such that
	\[
	p(x) = \left\{
	\begin{array}{ll}
	(h^{-1}gh)(x) & x \in [0,\frac{1}{n}[  \\
	x & x \in [\frac{1}{n},1[
	\end{array}
	\right.
	\]
	satisfies $v(p) = g$. We conclude that $v$ is an isomorphism, and that $K_2$ is isomorphic to $V_n$. One can prove that $K_1$ is isomorphic to $V_n$ by considering the map
	$$h: [0,1[\rightarrow[\frac{n-1}{n},1[,\quad h(x) = (n-1+x)/n$$
	instead. Therefore, $V_n$ is embedded in $J_i$ for every $i$. Since by \cite{birget}, $V_2$ is embedded in $V_n$, we conclude that $V_2$ is embedded in $J_i$.
	
	We have that the elements of $J_i$ preserve the elements of the set $\mathcal{X}_i$, where $\mathcal{X}_i$ is the set
	\[ \mathcal{X}_1 = \left[0,\frac{1}{n}\right[\hspace*{4pt} \cap \hspace*{4pt}  \text{orb}_n(x),  \hspace*{30pt} \mathcal{X}_i = \left[\frac{i-1}{n},\frac{i}{n}\right[ \hspace*{4pt} \cap \hspace*{4pt} \text{orb}_n(x).  \]
	
	 The action $g\cdot y = g(y)$ of $V_2$ on $[0,1[$ is non-amenable (Theorem \ref{thm:olesen}). Since $V_2$ is embedded in $J_i$, the action $g\cdot y = g(y)$ of $J_i$ on $\mathcal{X}_i$ is also non-amenable. By Theorems \ref{thm:bigs4} and \ref{thm:olesen}, there exist $g_1, . . . , g_m \in J_i$ such that
	\[ \frac{1}{m} \| \sum_{k=1}^{m} \rhoxn(g_k) \| < 1, \]
	where we recall that by Theorem \ref{thm:bigs4} the action coincides with the representation $\rhoxn$.		
	Let $t = \frac{1}{m} \sum_{k=1}^{m} \rhoxn(g_k)$. Observe that $1 = S_1S_1^* + S_2S_2^* + \ldots +S_nS_n^*$, and that, given $a,b \in \{S_1S_1^*,S_2S_2^*,\ldots,S_nS_n^*\}$, we have $ab = \delta_{ab}a$ and $ta=at$, so that for any $k \in \mathbb{N}$
	\[ t^k = (t1)^k = (t(S_1S_1^* + \ldots + S_2S_2^*))^k = (tS_1S_1^*)^k + \ldots (tS_iS^*_i)^k + \ldots + (tS_nS_n^*)^k.\]
	On the other hand, since for any $g \in J_i$, we have $\rhoxn(g) = \ldots + S_iS_i^* +\ldots $, we conclude that $(tS_iS_i^*) = S_iS_i^*$, and thus $(tS_iS_i^*)^k = S_iS_i^*$. Since $\|t\| <1$ and $\|S_jS_j^*\| = 1$, we conclude that $t^k$ converges to $S_iS_i^*$. Thus, $S_iS_i^* \in C^*_{\rhoxn}(V_n)$ for all i. We will now use this to prove that $S_i \in C^*_{\rhoxn}(V_n)$ for all i.

	First notice that if $S_n$ is in $C^*_{\rhoxn}(V_n) $, then so is $S_i$ for any $i$. In fact, consider the following table
	\setcounter{MaxMatrixCols}{20}
	\[
	g=\begin{bmatrix}
	1 & \ldots & i1 & i2 & \ldots & in  & \ldots & a & \ldots & n1 & n2 & \ldots & nn\\
	1 & \ldots & n1 & n2 & \ldots & nn  & \ldots & a & \ldots & i1 & i2 & \ldots & in
	\end{bmatrix} \in V_n \]
	
	Then, noticing that for any $j \neq n$, $S_j^*S_n = 0$, and that only the words starting with $i$ are mapped to words starting with $n$, we obtain
	$\rhoxn(g)S_n =$
$$ (S_iS_1S^*_1S^*_n + S_iS_2S^*_2S^*_n + \ldots  + S_iS_nS^*_nS^*_n)S_n = S_i(S_1S^*_1 + S_2S^*_2 + \ldots + S_nS^*_n) = S_i$$
	Therefore $\rhoxn(g)S_n = S_i$. Thus, all we need to do now is to prove that $S_n \in C^*_{\rhoxn}(V_n) $. We start by proving that $S_iS_iS_i^* \in C^*_{\rhoxn}(V_n) $ for all $i$. To do this, consider the table on $v_n$ defined by $k=$
	\footnotesize{\[
	\begin{bmatrix}
	1 & 2 & \ldots & (i-1) & i1 & i2 & \ldots & i(i-1)  & ii & i(i+1) & \ldots & in & (i+1) & \ldots & n\\
	11 & 2 &\ldots & (i-1) & 12 & 13 & \ldots & 1i & i & 1(i+1) &\ldots & 1n & (i+1) & \ldots & n
	\end{bmatrix}.\]
}
	Then, $\rho(k)(S_iS_i^*) = (S_iS_iS_i^*)(S_iS_i^*) = S_iS_iS_i^*$. Therefore, $S_iS_iS_i^* \in C^*_{\rhoxn}(V_n) $.
	Let us consider the following table
	\[
	l=\begin{bmatrix}
	1  & \ldots & i & \ldots & n\\
	1 &\ldots & n & \ldots & i
	\end{bmatrix} \in V_n \]
	We have that $\rho(l)(S_iS_iS_i^*) = (S_nS_i^*)(S_iS_iS_i^*) = S_nS_iS_i^*$. Hence $S_nS_iS_i^* \in C^*_{\rhoxn}(V_n) $ for all $i$. Therefore
	\[ S_nS_1S_1^* + S_nS_2S_2^* + \ldots + S_nS_nS_n^* = S_n(S_1S_1^* + S_2S_2^* + \ldots + S_nS_n^*) = S_n(1)= S_n  \]
which is in $\in C^*_{\rhoxn}(V_n)$.
	This concludes the proof.
\end{proof}
	
	We will now use Corollary \ref{crl:pixy<->xy} and Theorem \ref{thm:Crho=piO} to prove  Theorem \ref{thm:rho<->xy} (1).

\begin{theorem}
\label{thm:rho<->xy}
		Let $n \geq 2$. Then:
\begin{enumerate}
  \item $\rhoxn \sim \rhoyn$ if and only if $x \sim y$;
  \item  $\rhoxn\circ \E \sim \rhoyn\circ \E$ if and only if $x \sim y$;
  \item $\rhoxn$ is an irreducible representations of $V_n$ on $\hxn$.
\end{enumerate}
	\end{theorem}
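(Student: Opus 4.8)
The plan is to establish the three parts in the order (3), (1), (2): part~(1) rests on the same commutant identity that drives~(3), and~(2) reduces to~(1) applied at the index $k(n-1)+1$. The decisive inputs are Theorem~\ref{thm:Crho=piO}, which identifies $C^*_{\rhoxn}(V_n)$ with $\pi_x(\mathcal{O}_n)$, together with the two facts supplied by \cite{ar2008} for the permutative representation $\pixn$ of $\mathcal{O}_n$: that $\pixn$ is irreducible on $\hxn$, and that $\pi_x\sim\pi_y$ as representations of $\mathcal{O}_n$ precisely when $x\sim y$.

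For part~(3) I would pass to commutants. Since $\rhoxn(g^{-1})=\rhoxn(g)^*$, the set $\rhoxn(V_n)$ is self-adjoint, so its commutant coincides with the commutant of the C*-algebra it generates; hence $\rhoxn(V_n)'=C^*_{\rhoxn}(V_n)'=\pi_x(\mathcal{O}_n)'$ by Theorem~\ref{thm:Crho=piO}. Because $\pixn$ is irreducible this commutant equals $\mathbb{C}1$, so $\rhoxn$ is irreducible.

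For part~(1), the backward implication is immediate: if $x\sim y$ then $\pi_x\sim\pi_y$, and any intertwining unitary $U$ for the Cuntz representations satisfies $U\pi_x(\Psi(g))=\pi_y(\Psi(g))U$, that is $U\rhoxn(g)=\rhoyn(g)U$, whence $\rhoxn\sim\rhoyn$. The forward implication is the heart of the matter and where I expect the main obstacle. Given a unitary $U$ with $U\rhoxn(g)=\rhoyn(g)U$ for all $g$, equivalently $U\pi_x(\Psi(g))=\pi_y(\Psi(g))U$, I would introduce
\[ \mathcal{A}=\{a\in\mathcal{O}_n : U\pi_x(a)=\pi_y(a)U\} \]
and check that it is a norm-closed $*$-subalgebra of $\mathcal{O}_n$ containing every $\Psi(g)$. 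The crucial point is then that $\Psi(V_n)$ generates all of $\mathcal{O}_n$: applying the injective, hence isometric, homomorphism $\pixn$ and invoking Theorem~\ref{thm:Crho=piO} gives $\pixn(C^*(\Psi(V_n)))=C^*_{\rhoxn}(V_n)=\pi_x(\mathcal{O}_n)$, so that $C^*(\Psi(V_n))=\mathcal{O}_n$. Consequently $\mathcal{A}=\mathcal{O}_n$, the same $U$ intertwines $\pi_x$ and $\pi_y$ on all of $\mathcal{O}_n$, and thus $\pi_x\sim\pi_y$, which forces $x\sim y$ by \cite{ar2008}. It is exactly this upgrading of a group-level intertwiner to a Cuntz-algebra intertwiner that makes Theorem~\ref{thm:Crho=piO} the linchpin of the argument.

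For part~(2) I would avoid a direct argument and instead reduce to part~(1) at the index $m=k(n-1)+1$, which satisfies $m\geq 2$ whenever $n\geq 2$ and $k\geq 1$; since the proof of~(1) used only Theorem~\ref{thm:Crho=piO} and \cite{ar2008}, both available at every index $\geq 2$, part~(1) yields $\rhoxk\sim\rhoyk$ if and only if $x\sim y$. Corollary~\ref{crl:Urho=rho'U}(1) furnishes the unitary equivalences $\rhoxk\sim\rhoxn\circ\E$ and $\rhoyk\sim\rhoyn\circ\E$, so by transitivity $\rhoxn\circ\E\sim\rhoyn\circ\E$ if and only if $\rhoxk\sim\rhoyk$, i.e.\ if and only if $x\sim y$.
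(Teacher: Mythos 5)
Your proposal is correct and follows essentially the same route as the paper: part (3) via the commutant identity $\rhoxn(V_n)'=C^*_{\rhoxn}(V_n)'=\pi_x(\mathcal{O}_n)'$ from Theorem \ref{thm:Crho=piO} plus irreducibility of $\pi_x$ from \cite{ar2008}; part (1) by upgrading the group-level intertwiner to a Cuntz-algebra intertwiner using the density of $\mathrm{span}\,\Psi(V_n)$ (your closed $*$-subalgebra $\mathcal{A}$ is just a cleaner packaging of the paper's approximation argument); and part (2) by reduction to part (1) through Corollary \ref{crl:Urho=rho'U}.
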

\begin{proof}
  (1)\ Suppose $x \sim y$. Then, $\hxn = \hyn$, and thus, $\rhoxn = \rhoyn$.
		Now, suppose that $\rhoxn \sim \rhoyn$. By definition, there must exist a unitary operator $K:\hxn \rightarrow \hyn$ such that, given $g \in V_n$,
		$\rhoxn(g) = K\rhoyn(g)K^*$
		which, can be rewritten as $\pi_x(\Psi(g)) = K\pi_y(\Psi(g))K^*$. Our goal is to show that, given $a \in \mathcal{O}_n$, $\pi_x(a) = K\pi_y(a)K^*$. In order to do this, let us consider the following subset of $\mathcal{O}_n$
		$B = \text{span}(\{\Psi(g): g \in V_n\})$
		Then, given $b \in B$, we have that
		$b = \sum_{i=1}^{m}c_i\Psi(g)$
		for some $c_i \in \mathbb{C}, m \in \mathbb{N}$. Furthermore, we have that
		\[ K\pi_y(b)K^* = K\pi_y(\sum_{i=1}^{m}c_i\Psi(g))K^* = \sum_{i=1}^{m}c_iK(\pi_y(\Psi(g)))K^* = \sum_{i=1}^{m}c_i\pi_x(\Psi(g)) = \pi_x(b).\]
		Let $a \in \mathcal{O}_n$. By Theorem \ref{thm:Crho=piO} there is a sequence $a_m$ in $B$ that converges to $a$. By continuity of $\pi_x$ and $\pi_y$, we conclude that $\pi_x(a) = K\pi_y(a)K^*$ and thus, $\pi_x \sim \pi_y$, which implies that $x \sim y$ by Corollary \ref{crl:pixy<->xy}.

  (2)\ It follows from Corollary \ref{crl:Urho=rho'U} and part (1) of this theorem.

  (3)\ Theorem \ref{thm:Crho=piO} also gives us that $\rhoxn$ is irreducible in $\hxn$, given that
	\[ (\rhoxn(V_n))^\prime = {(\text{span}(\rhoxn(V_n)))\ }^\prime = \overline{\text{span}(\{\rhoxn(g):g \in V_n\})\ }^\prime = C^*_{\rhoxn}(V_n)^\prime = \pi_x(\mathcal{O}_n)^\prime \]
	and then, since Theorem 6 in \cite{ar2008} implies that $\pi_x$ is irreducible in $\hxn$, $\rhoxn$ is irreducible in $\hxn$.
\end{proof}

\section*{Acknowledgements}
The first author would like to thank {\it Funda\c c\~{a}o Calouste Gulbenkian} Foundation. The second author was partially supported by FCT/Portugal through CAMGSD, IST-ID, projects UIDB/04459/2020 and UIDP/04459/2020.


\end{document}